\documentclass{article}
\usepackage{amsmath,amssymb,amsthm}
\usepackage{accents}
\usepackage{bm}
\usepackage{xcolor}
\usepackage[top=25truemm,bottom=25truemm,left=25truemm,right=25truemm]{geometry}
\usepackage{graphicx}
\usepackage{hyperref}
\usepackage{mathtools}
\usepackage{url}
\theoremstyle{definition}
\newtheorem{Def}{Definition}[section]
\newtheorem{Lem}[Def]{Lemma}
\newtheorem{Prop}[Def]{Proposition}
\newtheorem{Thm}[Def]{Theorem}

\newtheorem{Rmk}[Def]{Remark}
\newtheorem{Alg}[Def]{Algorithm}
\newtheorem{Prob}[Def]{Problem}

\makeatletter
\def\bar{\accentset{{\cc@style\underline{\mskip13mu}}}}

\makeatother
\title{{\bf\textsf{On superspecial hyperelliptic curves of Rosenhain forms}}}
\author{Ryo Ohashi}

\begin{document}
\maketitle
\begin{abstract}
Any genus-$g$ hyperelliptic curve $C$ defined over an algebraically closed field of characteristic $p \geq 3$ can be written in a Rosenhain form as $y^2 = x(x-1)\prod_{i=1}^{2g-1}(x-\lambda_i)$.
In this paper, we first show that, if $C$ is superspecial, then each of $\lambda_i,1-\lambda_i$, and $\lambda_i-\lambda_j$ is a square in $\mathbb{F}_{p^2}$.
As an application, we propose a new algorithm for enumerating superspecial hyperelliptic curves in small characteristic.
By implementing our algorithm, we successfully computed the number of isomorphism classes of such curves of genera $4$ and $5$ in all characteristics $p \leq 41$, and of genus $6$ in all characteristics $p \leq 31$.
\end{abstract}

\section{Introduction}
Throughout this paper, a \emph{curve} always means a non-singular projective variety of dimension one defined over a field of characteristic $p > 0$.
All \emph{isomorphisms} are taken over the algebraic closure of the base field, unless otherwise stated.
A curve is said to be \emph{superspecial} if its Jacobian is isomorphic to a product of supersingular elliptic curves (as unpolarized abelian varieties).
In recent years, superspecial curves have found applications in areas such as isogeny-based cryptography and algebraic geometry codes.
Consequently, understanding the properties of such curves has become an important research topic.
For a positive integer $g$ and a prime $p$, it is known (cf. \cite[Corollary 1.2]{NN}) that the number of isomorphism classes of superspecial curves of genus $g$ in characteristic $p$ is finite, and the following is a fundamental problem:\vspace{-0.8mm}
\begin{Prob}\label{prob:ssp}
For a given pair $(g,p)$, determine the number of isomorphism classes of superspecial genus-$g$ curves in characteristic $p$.\vspace{-0.2mm}
\end{Prob}
\noindent This problem has been solved for genera $g \leq 3$, but for $g \geq 4$, even the existence of such curves remains open in general $p$.
We refer to \cite{Kudo} for a survey of related works.

On the other hand, the focus of this paper is restricted to \emph{hyperelliptic} curves.
More precisely, we consider the following problem:\vspace{-0.8mm}
\begin{Prob}\label{prob:hyp}
For a given pair $(g,p)$, determine the number of isomorphism classes of superspecial genus-$g$ hyperelliptic curves in characteristic $p$.\vspace{-0.2mm}
\end{Prob}
\noindent Since every genus-2 curve is hyperelliptic, Problem~\ref{prob:hyp} is exactly the same as Problem~\ref{prob:ssp} for the case $g=2$, and hence has already been solved, as mentioned above.
However, for every $\hspace{-0.2mm}g \geq 3$, Problem~\ref{prob:hyp} remains open in general $p$.
Nevertheless, for small characteristics $p$, the answers to Problem~\ref{prob:hyp} are known in several cases, as summarized below:\vspace{-0.5mm}
\begin{itemize}
\item Ekedahl showed in \cite[Theorem 1.1]{Ekedahl} that if there exists a superspecial hyperelliptic curve of genus $g \geq 2$ in characteristic $p$, then $2g \leq p-1$ holds.
Furthermore, \hspace{-0.3mm}Valentini proved in \cite[Theorem 1]{Valentini} that there exists a unique isomorphism class of such curves, namely the curve $y^2 = x^p-x$, when $2g=p-1$ holds.
In other words, the answer to Problem~\ref{prob:hyp} is $0$ for $p \leq 2g-1$, and is $1$ for $p=2g+1$.
\vspace{-1.1mm}
\item In the case of genus $g=3$, the number of isomorphism classes for every characteristic $7 < p < 100$ was determined in \cite{OOKYN} by using the superspecial Richelot isogeny graph.\vspace{-1mm}
\item In the case of genus $g=4$, Kudo and Harashita determined in \cite{KH18,KH22} the number of isomorphism classes for the characteristics $p = 11,13,17$, and $19$ by using Gröbner basis computations.
\end{itemize}
To the best of our knowledge, for genus $g \in\hspace{-0.3mm} \{5,6\}$, the number of isomorphism classes has not been determined for any characteristic $p > 2g+1$.

\newpage
The main purpose of this paper is to extend the known results on Problem~\ref{prob:hyp} by determining the number of isomorphism classes of superspecial hyperelliptic curves of genera $4,5$, and $6$ in characteristics beyond the range previously covered in the literature.
To achieve this goal, we first establish the following useful theorem, which provides a necessary condition for hyperelliptic curves in \emph{Rosenhain forms} to be superspecial:\vspace{-1mm}
\begin{Thm}\label{thm:criterion}
Assume that $p > 2$. If a genus-$g$ hyperelliptic curve\vspace{-1.3mm}
\begin{equation*}\label{eq:C}
    C: y^2 = x(x-1)\prod_{i=1}^{2g-1}(x-\lambda_i)\vspace{-0.3mm}
\end{equation*}
in characteristic $p$ is superspecial, then all of $\lambda_i,1-\lambda_i,\lambda_i-\lambda_j$ are squares in $\mathbb{F}_{p^2}\hspace{-0.2mm}$ for all $i,j \in \{1,\ldots,2g-1\}$.\vspace{-0.5mm}
\end{Thm}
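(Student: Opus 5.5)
The plan is to reduce the statement to a $2$-descent computation on the Jacobian over $\mathbb{F}_{p^2}$. There are three ingredients:
\begin{enumerate}
\item[(a)] a model of $C$ over $\mathbb{F}_{p^2}$ whose Frobenius acts on $J=\mathrm{Jac}(C)$ as multiplication by $\pm p$;
\item[(b)] the consequence that $J[2]$ lies in $2J(\mathbb{F}_{p^2})$ for a suitable choice of sign;
\item[(c)] the classical descent map of a hyperelliptic curve with split defining polynomial, which turns $2$-divisibility of the $2$-torsion points $[(e_i,0)-\infty]$ into squareness of the differences $e_i-e_k$.
\end{enumerate}

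\emph{Step 1 (descent to $\mathbb{F}_{p^2}$).} I would invoke the standard fact that a superspecial curve has a model $C_0$ over $\mathbb{F}_{p^2}$ whose relative Frobenius $\pi$ acts on $J_0=\mathrm{Jac}(C_0)$ as $[\varepsilon p]$ for some $\varepsilon\in\{\pm1\}$. This comes from the Jacobian being $E^g$ with $E$ supersingular having $\pi_E=-p$, plus Torelli; for hyperelliptic curves the sign ambiguity in Torelli is harmless. For a hyperelliptic curve, taking the quadratic twist by a non-square of $\mathbb{F}_{p^2}$ replaces $\pi$ by $-\pi$, so both signs occur. I choose $\varepsilon$ with $p\equiv\varepsilon \pmod 4$. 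Then
\[
J_0(\mathbb{F}_{p^2})=\ker(\pi-1)=J_0[p-\varepsilon]\cong(\mathbb{Z}/(p-\varepsilon))^{2g},
\]
and since $4\mid p-\varepsilon$ we get $J_0[2]\subset 2J_0(\mathbb{F}_{p^2})$; indeed $J_0[4]$ is rational.

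\emph{Step 2 (returning to the Rosenhain form).} Since $J_0[2]$ is rational and is generated by differences of Weierstrass points, the Galois action on the $2g+2$ Weierstrass points is trivial. The subtle point is that a differences-only argument permits Frobenius to act as a permutation acting trivially on differences, but for $2g+2\ge 6$ points this is only the identity. So all Weierstrass points are $\mathbb{F}_{p^2}$-rational. A Möbius transformation over $\mathbb{F}_{p^2}$ then moves the three points corresponding to $0,1,\infty$ of the given Rosenhain form to $0,1,\infty$, and since cross-ratios are invariant this produces the same $\lambda_i$. Hence the given equation, possibly after rescaling $y$ by a constant (which does not affect $x$-coordinates), is an $\mathbb{F}_{p^2}$-form with $\pi=\pm p$. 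Rescaling $y$ amounts at worst to the quadratic twist in Step 1, so I may assume that $C$ itself, written as $y^2=f(x)$ with $f$ the given product, is the $\mathbb{F}_{p^2}$-model chosen in Step 1.

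\emph{Step 3 (descent map).} Put $e\in\{0,1,\lambda_1,\dots,\lambda_{2g-1}\}$ for the finite roots and $k=\mathbb{F}_{p^2}$. For $y^2=f(x)$ of odd degree with split $f$, the map
\[
J(k)/2J(k)\to\prod_{e}k^\times/k^{\times2},\qquad D\mapsto\bigl((x-e)(D)\bigr)_e,
\]
is a well-defined homomorphism. At the root $e_i$ itself, the component $(x-e_i)(D)$ is defined using $f'(e_i)$, i.e.\ via $x-e_i=-\prod_{k\neq i}(x-e_k)$ modulo squares. Apply it to $D=[(e_i,0)-\infty]$. Its $e_k$-component for $k\ne i$ is $e_i-e_k$. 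Because $D\in J[2]\subset 2J(k)$, the image is trivial, so every $e_i-e_k$ is a square in $\mathbb{F}_{p^2}$. Taking the pairs $(e_i,e_k)=(\lambda_i,0)$, $(1,\lambda_i)$, $(\lambda_i,\lambda_j)$ gives $\lambda_i$, $1-\lambda_i$ and $\lambda_i-\lambda_j$ as squares up to sign. Since $-1$ is always a square in $\mathbb{F}_{p^2}$, this proves the theorem.

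The main obstacle I expect is Step 1, namely getting a clean citation or argument that the $\mathbb{F}_{p^2}$-model exists with $\pi=\pm p$ exactly, and not merely up to an automorphism. I would argue through the hyperelliptic Torelli theorem and the twist, using $\pi_E=-p$ for a suitable model of $E$. Step 3 is classical (Cassels–Flynn) once the homomorphism property is quoted.
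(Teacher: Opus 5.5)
\textbf{The gap is at the end of Step 2.} After the $\mathbb{F}_{p^2}$-rational M\"obius transformation, your Step-1 model $C_0$ (with $\pi=\varepsilon p$ and $p\equiv\varepsilon\pmod 4$) becomes $y^2=c\,f(x)$ for some $c\in\mathbb{F}_{p^2}^\times$. Nothing you have said forces $c$ to be a square.

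If $c$ is not a square, then $y^2=f(x)$ is the quadratic twist, and its Frobenius is $-\varepsilon p$. Its $\mathbb{F}_{p^2}$-points form $J[p+\varepsilon]\cong(\mathbb{Z}/(p+\varepsilon))^{2g}$ with $p+\varepsilon\equiv 2\pmod 4$. Hence $J[2]\cap 2J(\mathbb{F}_{p^2})=0$, and Step 3 applied to this model yields nothing.

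So the inference ``rescaling $y$ amounts at worst to the quadratic twist, hence I may assume $C$ is the Step-1 model'' is a non sequitur. The twist is precisely the model with the wrong sign. You cannot both fix the sign of Frobenius (already done in Step 1) and insist that the equation be $y^2=f(x)$ with the given $f$.

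\textbf{The repair is short.} Apply Step 3 to $y^2=c\,f(x)$ itself, normalized to the monic model $Y^2=\prod_e(X-ce)$ via $X=cx$, $Y=c^g y$. This model is $\mathbb{F}_{p^2}$-isomorphic to $C_0$, so $J[2]\subset 2J(\mathbb{F}_{p^2})$ holds for it. The descent map then gives $c(e_i-e_k)\in\mathbb{F}_{p^2}^{\times 2}$ for all $i\neq k$. The pair $(1,0)$ shows that $c$ is a square, so a posteriori the twist was the right one, and then every $e_i-e_k$ is a square. Equivalently, you can use only the ratios $(e_i-e_k)/(e_i-e_l)$, in which $c$ cancels.

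\textbf{Comparison with the paper.} With this fix your argument is correct, and it differs from the paper's in its key lemma.
\begin{itemize}
\item The paper avoids the $x-e$ descent homomorphism (Cassels--Flynn, resting on Weil reciprocity). Instead it writes down the half-points of $[P_i]-[\infty]$ explicitly, checking them with Cantor doubling.
\item From the rationality of four of these half-points, it extracts $r_jr_k\in K$ through the $e_2$-coefficient identity. It therefore only ever proves that the ratios $(a_i-a_j)/(a_i-a_k)$ are squares.
\item It then writes $\lambda_i$, $1-\lambda_i$ and $\lambda_i-\lambda_j$ as products of such ratios via cross-ratio formulas. It never moves $H$ into Rosenhain form over $\mathbb{F}_{p^2}$, which is exactly how it sidesteps the constant $c$.
\end{itemize}
Your route is shorter once the descent machinery is quoted. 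It uses only $J[2]\subset 2J(\mathbb{F}_{p^2})$ and gives the slightly stronger conclusion that $c(e_i-e_k)$ is itself a square. The paper's argument is more elementary and self-contained.
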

\noindent This theorem can be seen as a generalization of \cite[Main \hspace{-0.2mm}Theorem \hspace{-0.3mm}A(1)]{Ohashi}, which corresponds to the case $g=2$.
As an application of this theorem, we propose a new algorithm (Algorithm \ref{alg:main}) for enumerating superspecial genus-$g$ hyperelliptic curves in characteristic $p$ for fixed $(g,p)$.
Essentially, the algorithm consists of collecting all sets $\{\lambda_1,\ldots,\lambda_{2g-1}\}$ satisfying the condition of Theorem~\ref{thm:criterion} and then checking whether the Cartier-Manin matrix of the corresponding curve is zero (for details on the Cartier-Manin matrix, see \cite[Section 2]{Yui}).
Our approach has the advantage of avoiding Gröbner basis computations and allowing isomorphism testing to be performed relatively easily.
For small characteristics $p$, since the number of candidate sets $\{\lambda_1,\ldots,\lambda_{2g-1}\}$ is sufficiently small, our algorithm is computationally feasible.
By implementing and running the algorithm in \textsf{Magma}, we obtain the following main results:\vspace{-1mm}
\begin{Thm}\label{thm:genus4}
The number of isomorphism classes of superspecial genus-$4$ hyperelliptic curves is as follows:\vspace{-0.7mm}
\begin{enumerate}
\item[(1)] There are precisely $4$ isomorphism classes of such curves in characteristic $23$.\vspace{-2.2mm}
\item[(2)] There are precisely $8$ isomorphism classes of such curves in characteristic $29$.\vspace{-2.2mm}
\item[(3)] There are precisely $10$ isomorphism classes of such curves in characteristic $31$.\vspace{-2.2mm}
\item[(4)] There are precisely $23$ isomorphism classes of such curves in characteristic $37$.\vspace{-2.2mm}
\item[(5)] There are precisely $34$ isomorphism classes of such curves in characteristic $41$.
\end{enumerate}
\end{Thm}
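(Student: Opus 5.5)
The statement is a computational classification, so the proof reduces the problem to a finite, exhaustive search that is guaranteed complete by Theorem~\ref{thm:criterion}, and then removes isomorphic duplicates. For each $p \in \{23,29,31,37,41\}$ we proceed in four stages.

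\textbf{Step 1: normalize.} Every genus-$4$ hyperelliptic curve in characteristic $p>2$ can be put in Rosenhain form
\[
y^2 = x(x-1)\prod_{i=1}^{7}(x-\lambda_i),
\]
where the $\lambda_i$ are distinct and different from $0$ and $1$. This is done by sending three of the ten Weierstrass points to $0,1,\infty$.

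\textbf{Step 2: reduce to a finite set of candidates.} Assume $C$ is superspecial. Then $\mathrm{Jac}(C)$ is defined over $\mathbb{F}_{p^2}$ up to twist. More directly, the condition of Theorem~\ref{thm:criterion} forces $\lambda_i=\lambda_i-0$ to be a square in $\mathbb{F}_{p^2}$, so in particular $\lambda_i\in\mathbb{F}_{p^2}$. Hence every $\lambda_i$ lies in the set
\[
S=\{\lambda\in\mathbb{F}_{p^2}\setminus\{0,1\} : \lambda,\ 1-\lambda \text{ are squares in } \mathbb{F}_{p^2}\}.
\]

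\textbf{Step 3: enumerate.} Form the graph on $S$ with an edge between $\lambda$ and $\mu$ whenever $\lambda-\mu$ is a square. Note that $-1$ is a square in $\mathbb{F}_{p^2}$, so this relation is symmetric. The admissible sets $\{\lambda_1,\ldots,\lambda_7\}$ are exactly the $7$-cliques of this graph, and we enumerate all of them. For each clique we compute the $4\times4$ Cartier--Manin matrix, namely the coefficients of $x^{pi-j}$ in $f^{(p-1)/2}$, and keep the clique exactly when this matrix vanishes. By Nygaard's criterion, vanishing of the matrix is equivalent to superspeciality. Theorem~\ref{thm:criterion} guarantees that this list of cliques contains a Rosenhain model of every superspecial curve, so the search is exhaustive.

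To make the search feasible, we reduce by symmetry. The group $S_3$ of M\"obius maps permuting $\{0,1,\infty\}$ acts on $S$, since the square conditions are compatible with it. More generally, $\mathrm{PGL}_2$ acts on the branch set. We can therefore prune, for example by choosing $\lambda_1$ from orbit representatives and imposing the ordering $\lambda_1<\dots<\lambda_7$ in a fixed enumeration of $\mathbb{F}_{p^2}$.

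\textbf{Step 4: sort into isomorphism classes.} Two such curves are isomorphic if and only if their $10$-point branch sets in $\mathbb{P}^1$ are $\mathrm{PGL}_2(\overline{\mathbb{F}}_p)$-equivalent. Since the branch points lie in $\mathbb{P}^1(\mathbb{F}_{p^2})$ and any M\"obius map sending three of them into $\mathbb{P}^1(\mathbb{F}_{p^2})$ is defined over $\mathbb{F}_{p^2}$, it suffices to work over $\mathbb{F}_{p^2}$. For each surviving set we compute a canonical form: send each ordered triple of branch points to $(0,1,\infty)$, giving $10\cdot9\cdot8$ Rosenhain representatives, and take the lexicographically minimal resulting $7$-set. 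Alternatively, Shioda or Igusa-type invariants of binary decimics could be used, but the canonical-form method is elementary and certain. Counting the distinct canonical forms yields $4,8,10,23,34$.

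\textbf{Main obstacle.} The main difficulty is the cost of the clique enumeration for $p=37$ and $p=41$. Here $|S|\approx p^2/4$, which is about $400$, and a naive search over $7$-subsets is impossible. The plan is to use incremental backtracking: maintain the common neighbourhood of the points chosen so far, and quotient by the $S_3$ symmetry at the first choice. A further option is to test a partial Cartier--Manin condition early, but since the matrix depends on all of the $\lambda_i$, we would instead rely on the roughly $4^{-k}$ density of cliques to keep the search small. The final numbers are outputs of this verified computation in \textsf{Magma}.
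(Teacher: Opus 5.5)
Your proposal is correct and follows the paper's route: use Theorem~\ref{thm:criterion} to restrict to $7$-cliques of the square-difference graph on $S$, keep those whose Cartier--Manin matrix vanishes (Nygaard's criterion), and identify isomorphic curves by comparing all $10\cdot 9\cdot 8$ Rosenhain forms, the counts $4,8,10,23,34$ being the output of that computation. The paper differs only in bookkeeping: it stores Rosenhain forms in a list $\mathcal{T}$ and deduplicates before the Cartier--Manin test rather than taking a lexicographic canonical form afterwards. One caution: your pruning that takes $\lambda_1=\min\{\lambda_i\}$ to be an $S_3$-orbit representative is sound only if the representatives are the minimal elements of their orbits in the same fixed enumeration of $\mathbb{F}_{p^2}$.
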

\begin{Thm}\label{thm:genus5}
The number of isomorphism classes of superspecial genus-$5$ hyperelliptic curves is as follows:\vspace{-0.7mm}
\begin{enumerate}
\item[(1)] There are no such curves in characteristics $13$ and $17$.\vspace{-2.2mm}
\item[(2)] There is a unique isomorphism class of such curves in characteristics $19$ and $29$.\vspace{-2.2mm}
\item[(3)] There are precisely $2$ isomorphism classes of such curves in characteristic $23$.\vspace{-2.2mm}
\item[(4)] There are precisely $6$ isomorphism classes of such curves in characteristic $31$.\vspace{-2.2mm}
\item[(5)] There are precisely $5$ isomorphism classes of such curves in characteristic $37$.\vspace{-2.2mm}
\item[(6)] There are precisely $3$ isomorphism classes of such curves in characteristic $41$.
\end{enumerate}
\end{Thm}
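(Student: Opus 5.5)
The statement is computational, so I would prove it by running Algorithm~\ref{alg:main} for $g=5$ and $p\in\{13,17,19,23,29,31,37,41\}$, after first justifying that the search it performs is exhaustive. Let $C$ be a superspecial genus-$5$ hyperelliptic curve in one of these characteristics. Its $12$ Weierstrass points can be moved by an element of $\mathrm{PGL}_2$ so that three of them become $\infty,0,1$, which gives a Rosenhain form with parameters $\lambda_1,\ldots,\lambda_9$. By Theorem~\ref{thm:criterion}, every $\lambda_i$ is a square in $\mathbb{F}_{p^2}$, so in particular $\lambda_i\in\mathbb{F}_{p^2}$. Since $-1$ is a square in $\mathbb{F}_{p^2}$, the condition ``$a-b$ is a square'' is symmetric in $a$ and $b$. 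The theorem therefore says exactly that $\{0,1,\lambda_1,\ldots,\lambda_9\}$ is an $11$-clique in the Paley graph $P(p^2)$ on $\mathbb{F}_{p^2}$ that contains the edge $\{0,1\}$. Consequently, enumerating all such cliques produces every superspecial curve at least once, and this is the whole search space.

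Concretely, I would enumerate the cliques by backtracking with $\lambda_1<\cdots<\lambda_9$ in a fixed ordering of $\mathbb{F}_{p^2}$. The candidate pool is the common neighbourhood of $0$ and $1$, which has roughly $p^2/4$ elements, and at each step I would intersect the pool with the neighbourhood of the newly chosen element. For each completed clique I would compute the Cartier--Manin matrix of $y^2=f(x)$, where $f=x(x-1)\prod(x-\lambda_i)$. Its entries are the coefficients of $x^{ip-j}$ in $f^{(p-1)/2}$ for $1\le i,j\le 5$, and I keep the clique exactly when all these entries vanish. For $p=13$ and $p=17$ I expect no clique to survive, which gives part (1). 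Ekedahl's bound together with Valentini's result explains why these are the first two open cases.

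To count isomorphism classes among the survivors, I would use the fact that two such curves are isomorphic if and only if their branch sets $\{\infty,0,1,\lambda_i\}$ are $\mathrm{PGL}_2(\overline{\mathbb{F}}_p)$-equivalent. Since the branch points are $\mathbb{F}_{p^2}$-rational, it suffices to test the $12\cdot11\cdot10$ ordered triples that can be sent to $(\infty,0,1)$. For each survivor I would compute all these renormalized $\lambda$-sets and take the lexicographically least one as a canonical form. The number of distinct canonical forms is then the count claimed in (2)--(6).

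The main obstacle is the cost of enumeration at $p=37$ and $p=41$, where $P(p^2)$ has clique number $p$ and contains very many $11$-cliques. I would first exploit symmetry: the maps $x\mapsto 1-x$ and $x\mapsto x^p$ (and more generally the stabilizer of $\{\infty,0,1\}$ that preserves the Paley structure) act on the cliques, so I would restrict to, say, the least $\lambda_1$ in each orbit. If that is not enough, I would prune early using partial Cartier--Manin information, or replace full computation with a cheap necessary test, such as requiring the Hasse--Witt entries to vanish modulo a quick check, before doing the full computation.
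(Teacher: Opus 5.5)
Your proposal is correct and matches the paper's proof: the paper runs Algorithm~\ref{alg:main} for $g=5$, using Theorem~\ref{thm:criterion} to restrict to $9$-sets of elements of $S$ with pairwise square differences (your Paley-clique formulation). It then tests vanishing of the coefficients of $x^{ip-j}$ in $f(x)^{(p-1)/2}$ and identifies curves via the $4g(g+1)(2g+1)=1320$ renormalized $\lambda$-sets. The only difference is bookkeeping: the paper discards isomorphic $\lambda$-sets before the Cartier--Manin test using the list $\mathcal{T}$, whereas you canonicalize the survivors afterwards, and both are valid.
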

\begin{Thm}\label{thm:genus6}
The number of isomorphism classes of superspecial genus-$6$ hyperelliptic curves is as follows:\vspace{-0.9mm}
\begin{enumerate}
\item[(1)] There are no such curves in characteristics $17,19$, and $29$.\vspace{-2.5mm}
\item[(2)] There is a unique isomorphism class of such curves in characteristics $23$ and $31$.
\end{enumerate}
\end{Thm}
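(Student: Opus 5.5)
The plan is to run Algorithm~\ref{alg:main} for $g=6$ and $p\in\{17,19,23,29,31\}$ and to read off the counts; Theorem~\ref{thm:criterion} is what makes the search finite and small. By Ekedahl's bound, superspecial genus-$6$ hyperelliptic curves can exist only for $p\geq 13$. The case $p=13$ is Valentini's unique curve $y^2=x^{13}-x$, so the listed primes are the next ones to settle.

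\textbf{Step 1: reduce to a finite search over $\mathbb{F}_{p^2}$.} Every genus-$6$ hyperelliptic curve has a Rosenhain form $y^2=x(x-1)\prod_{i=1}^{11}(x-\lambda_i)$. If the curve is superspecial, Theorem~\ref{thm:criterion} gives $\lambda_i\in\mathbb{F}_{p^2}$, and moreover $\lambda_i$, $1-\lambda_i$ and $\lambda_i-\lambda_j$ are squares in $\mathbb{F}_{p^2}$. Since every element of $\mathbb{F}_p$ is a square in $\mathbb{F}_{p^2}$, the condition only bites for elements outside $\mathbb{F}_p$. Encode the condition as a graph:
\begin{itemize}
\item the vertex set is $V=\{\lambda\in\mathbb{F}_{p^2}\setminus\{0,1\} : \lambda,\,1-\lambda \text{ squares}\}$, which has roughly $p^2/4$ elements;
\item two vertices $\lambda,\mu$ are joined when $\lambda-\mu$ is a square.
\end{itemize}
This relation is symmetric because $-1$ is a square in $\mathbb{F}_{p^2}$. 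The candidate sets $\{\lambda_1,\ldots,\lambda_{11}\}$ are then exactly the $11$-cliques of this graph.

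\textbf{Step 2: enumerate cliques with symmetry pruning.} I expect this to be the main obstacle, because naive clique enumeration at $p=31$ is expensive. I would do the following:
\begin{itemize}
\item Order $V$ and extend cliques only by larger vertices, intersecting neighbourhoods as I go.
\item Use the symmetries preserving the Rosenhain normalization, namely the $S_3$ generated by $x\mapsto 1-x$ and $x\mapsto 1/x$, together with the fact that any branch point may be moved to $0,1,\infty$. This lets me require that the chosen configuration be minimal under a cheap partial canonical check, for example that $\lambda_1$ is the smallest among its images.
\item Apply Frobenius to the whole set, which is also a symmetry.
\item As a further filter, check a few entries of the Cartier--Manin matrix after every few vertices are added. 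The degree-$p-1$ pieces of $f^{(p-1)/2}$ cannot be determined from a partial product, so this partial test is heuristic. I would therefore use it only for ordering the search, not for pruning.
\end{itemize}

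\textbf{Step 3: test superspeciality.} For each surviving $11$-clique, form $f=x(x-1)\prod(x-\lambda_i)$. Compute the $6\times 6$ Cartier--Manin matrix, whose entries are the coefficients of $x^{ip-j}$ in $f^{(p-1)/2}$, and keep exactly those cliques for which the matrix is zero.

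\textbf{Step 4: sort the survivors into isomorphism classes.} Two such curves are isomorphic if and only if their $14$ branch points in $\mathbb{P}^1$ are $\mathrm{PGL}_2$-equivalent. To compare them, compute a canonical form for each curve:
\begin{itemize}
\item for each ordered triple of branch points, apply the Möbius map sending it to $(0,1,\infty)$;
\item sort the resulting set of $11$ remaining values;
\item take the lexicographic minimum over all triples.
\end{itemize}
The number of distinct canonical forms is then the number of isomorphism classes. I expect this to give $0$ classes for $p=17,19,29$ and $1$ class for $p=23,31$, as asserted in Theorem~\ref{thm:genus6}.

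As a sanity check I would rerun the whole pipeline at $p=13$ and confirm that it recovers exactly Valentini's curve. I would also cross-check the output against known counts for smaller genus, for instance the genus-$4$ counts of Kudo--Harashita.
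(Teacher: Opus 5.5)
Your plan is the paper's proof. The theorem is obtained purely by running Algorithm~\ref{alg:main} with $g=6$: enumerate the $11$-element sets allowed by Theorem~\ref{thm:criterion}, keep those with vanishing Cartier--Manin matrix, and remove duplicates via M\"obius normalizations of the $14$ branch points. Your clique formulation and lexicographic canonical form are implementation variants of the paper's list $\mathcal{T}$. The $S_3$ pruning is sound: the lexicographically minimal Rosenhain form of a class automatically has $\lambda_1$ minimal in its $S_3$-orbit.

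One step is wrong as stated, namely using Frobenius as a pruning symmetry. Conjugation $\{\lambda_i\}\mapsto\{\lambda_i^p\}$ preserves the square conditions and the vanishing of the Cartier--Manin matrix. However, it sends $C$ to its Frobenius twist $C^{(p)}$, which in general is not isomorphic to $C$ over $\overline{\mathbb{F}}_p$. Supersingular elliptic curves with $j\notin\mathbb{F}_p$ already show this. If you prune by Frobenius, your Step~4 counts Frobenius orbits of isomorphism classes rather than isomorphism classes. Part~(1) is unaffected, since zero orbits means zero classes. In part~(2), however, a single surviving orbit is also compatible with two non-isomorphic conjugate classes, so it does not prove uniqueness. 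You must either drop the Frobenius pruning, or compute the canonical form of each survivor's conjugate set and check that it equals the survivor's own canonical form before concluding that the class is unique.
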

\noindent For details of the computational experiments, see Section \ref{sec:algorithm}.
The above results can be summarized in Table~\ref{tbl:summary}, where our contributions are highlighted in \textcolor{magenta}{magenta}:\vspace{-2.6mm}
\begin{table}[h]
    \centering
    \caption{The number of isomorphic classes of superspecial genus-$g$ hyperelliptic curves in characteristic $p$}\label{tbl:summary}\vspace{2mm}
\begin{tabular}{c|c|c|c|c|c|c|c|c|c|c|c|c}
        $\!\!g \hspace{0.1mm}\backslash\hspace{0.5mm} p\!$ & $\leq 5$ & \hspace{0.8mm}$7$\hspace{0.8mm} & $11$ & $13$ & $17$ & $19$ & $23$ & $29$ & $31$ & $37$ & $41$ & $\cdots$\\\hline
        $3$ & 0 & 1 & 1 & 1 & 2 & 4 & 9 & 10 & 27 & 35 & 54 & $\cdots$\\\hline
        $4$ & 0 & 0 & 0 & 0 & 2 & 2 & \textcolor{magenta}{4} & \textcolor{magenta}{8} & \textcolor{magenta}{10} & \textcolor{magenta}{23} & \textcolor{magenta}{34} & \\\hline
        $5$ & 0 & 0 & 1 & \textcolor{magenta}{0} & \textcolor{magenta}{0} & \textcolor{magenta}{1} & \textcolor{magenta}{2} & \textcolor{magenta}{1} & \textcolor{magenta}{6} & \textcolor{magenta}{5} & \textcolor{magenta}{3} & \\\hline
        $6$ & 0 & 0 & 0 & 1 & \textcolor{magenta}{0} & \textcolor{magenta}{0} & \textcolor{magenta}{1} & \textcolor{magenta}{0} & \textcolor{magenta}{1} & & & 
    \end{tabular}
\end{table}\vspace{-4.8mm}

\paragraph{Organization.}
The remainder of this paper is organized as follows.
In Section \ref{sec:preliminaries}, after reviewing preliminaries on hyperelliptic curves, we prove Theorem \ref{thm:criterion}.
In Section \ref{sec:algorithm}, we present the main algorithm and report the computational results obtained from its implementation, namely \hspace{-0.2mm}Theorems \ref{thm:genus4}, \ref{thm:genus5}, and \ref{thm:genus6}.\vspace{-2.8mm}

\paragraph{Acknowledgements.} This research was supported by JSPS Grant-in-Aid for Young Scientists 25K17225.

\newpage
\section{Hyperelliptic curves of Rosenhain forms}\label{sec:preliminaries}
Let $K$ be a field of characteristic $p > 2$.
In this section, we shall consider a hyperelliptic curve of genus $g \geq 2$ defined by the affine equation\vspace{-2.8mm}
\begin{equation}\label{eq:H}
    H: Y^2 = c\prod_{i=1}^{2g+1}(X-a_i) \eqqcolon f(X), \quad c \in K^\times,\vspace{-1mm}
\end{equation}
where the $a_i$ are distinct elements of $\bar{K}$ \hspace{-0.3mm}(we also set $a_{2g+2} \coloneqq \infty$).
For any permutation $\sigma$ of $\{1,\ldots,2g+2\}$, the Möbius transformation\vspace{-1.7mm}
\[
    X \longmapsto \frac{X-a_{\sigma(2g)}}{X-a_{\sigma(2g+2)}} \cdot \frac{a_{\sigma(2g+1)}-a_{\sigma(2g+2)}}{a_{\sigma(2g+1)}-a_{\sigma(2g)}} \eqqcolon x
\]
maps $a_{\sigma(2g)},a_{\sigma(2g+1)}$, and $a_{\sigma(2g+2)}$ to $0,1$, and $\infty$, respectively.
After a suitable change of the $Y$-coordinate, it then induces an isomorphism over $\bar{K}$ from $H$ to its \emph{Rosenhain form}\vspace{-1.1mm}
\[
    y^2 = x(x-1)\prod_{i=1}^{2g-1}(x-\lambda_i), \quad \lambda_i \coloneqq \frac{a_{\sigma(i)}-a_{\sigma(2g)}}{a_{\sigma(i)}-a_{\sigma(2g+2)}} \cdot \frac{a_{\sigma(2g+1)}-a_{\sigma(2g+2)}}{a_{\sigma(2g+1)}-a_{\sigma(2g)}}.\vspace{-0.4mm}
\]
Conversely, every Rosenhain form of $H$ arises from a permutation $\sigma$ of $\{1,\ldots,2g+2\}$ in this way.\vspace{-1.1mm}
\begin{Rmk}
Since the Rosenhain form obtained above depends only on the images of $2g,2g\hspace{-0.1mm}+\hspace{-0.1mm}1$, and $2g\hspace{-0.1mm}+\hspace{-0.1mm}2$ under $\sigma$, the number of distinct Rosenhain forms of $H$ is at most $(2g\hspace{-0.1mm}+\hspace{-0.1mm}2) \cdot (2g\hspace{-0.1mm}+\hspace{-0.1mm}1) \cdot (2g) = 4g(g+1)(2g+1)$.
\end{Rmk}

Next, for each $i \in \{1,\ldots,2g+1\}$, let $P_i \coloneqq (a_i,0)$ be a \hspace{-0.3mm}Weierstrass point of $H$.
Then, the divisor class\vspace{-0.9mm}
\begin{equation}\label{eq:D_i}
    D_i \coloneqq [P_i] - [\infty] \ \text{ with }\,i \in \{1,\ldots,2g+1\}
\end{equation}
is a $2$-torsion point of the Jacobian $J$ of $H$, and its Mumford representation is given by $[X-a_i,0]$.
Moreover, the Mumford representations of the half-points of $D_i$ are described as follows (in particular, when $c=1$, the resulting formula coincides with \cite[Theorem 3.2]{Zarhin}).\vspace{-1.3mm}
\begin{Lem}\label{lem:half-point}
Fix $i \in\hspace{-0.1mm} \{1,\ldots,2g+1\}$.
With the above notation, for each $\ell \in \{1,\ldots,2g+1\}\!\smallsetminus\!\{i\}$, let $r_\ell \in \bar{K}^\times\hspace{-0.3mm}$ be an arbitrarily chosen square root of $c(a_i-a_\ell)$.
Define the tuple $\bm{r} \coloneqq (r_\ell)_{\ell \neq i}$ and\vspace{-0.6mm}
\begin{align}
    u_i(X) &\coloneqq \frac{(-1)^g}{c^g}\sum_{k=0}^{g}e_{2k}(\bm{r})\bigl(-c(X-a_i)\bigr)^{g-k},\label{eq:u_i}\\
    v_i(X) &\coloneqq \frac{1}{c^g}\sum_{k=1}^ge_{2k-1}(\bm{r})\bigl(-c(X-a_i)\bigr)^{g-k+1},\nonumber
\end{align}
where $e_k$ denotes the $k$-th elementary symmetric polynomial.
Then $[u_i(X),v_i(X) \bmod u_i(X)]$ is a Mumford representation of a point $T_i \in J(\bar{K})$ satisfying $2T_i = D_i$.\vspace{-1mm}
\end{Lem}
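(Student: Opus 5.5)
We need to prove Lemma \ref{lem:half-point}: the pair $(u_i,v_i)$ satisfies $u_i$ monic of degree $g$ and $u_i \mid v_i^2 - f$ (with $\deg(v_i \bmod u_i) < g$), and the resulting class $T_i$ satisfies $2T_i = D_i$.

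First translate $X$ so that $a_i = 0$, and put $t \coloneqq -cX$. Then $c(X-a_\ell) = -(t - c(a_i-a_\ell)) \cdot$(sign bookkeeping). Since $c(a_i - a_\ell) = r_\ell^2$, we get $c(X-a_\ell) = r_\ell^2 - t$ up to sign. Hence
\[
c^{2g+1} f(X) = c^{2g+2}\prod_{\ell}(X-a_\ell) = \pm\, t \prod_{\ell\neq i}(t - r_\ell^2).
\]
Write $P(s) \coloneqq \prod_{\ell\ne i}(s - r_\ell) = \sum_{k}(-1)^k e_k(\bm r) s^{2g-k}$. Splitting into even and odd parts gives $P(s) = E(s^2) - s\,O(s^2)$, where $E(t) = \sum_k e_{2k}(\bm r) t^{g-k}$ and $O(t) = \sum_{k\ge1} e_{2k-1}(\bm r) t^{g-k}$. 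The key identity is
\[
P(s)P(-s) = E(t)^2 - t\,O(t)^2 = \prod_{\ell\neq i}(t - r_\ell^2), \qquad t = s^2.
\]
Multiplying by $t$ gives $t\,E(t)^2 - (t\,O(t))^2 = t\prod(t-r_\ell^2)$. By construction, $u_i$ is a constant multiple of $E(t)$ and $v_i$ is a constant multiple of $t\,O(t)$, namely $c^{-g}\,tO(t)$. Therefore $v_i^2 - f$ is, after checking constants, a multiple of $E(t)$, so $u_i \mid v_i^2 - f$. I will verify the normalizations: the leading coefficient of $u_i$ in $X$ is $(-1)^g c^{-g}(-c)^g = 1$, so $u_i$ is monic, and the constant $c^{-2g}$ in $v_i^2$ must match $c$ in $f$ via $c^{2g+1}f = \pm tP(s)P(-s)$. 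This constant check, including signs, is the routine part.

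Next I need the divisor interpretation. Let $D$ be the effective divisor of degree $g$ cut out by $u_i = 0$ and $Y = v_i(X)$. Then consider the function $Y - v_i(X)$. Its zeros are the points where $Y^2 = v_i^2$ and $Y = v_i$, i.e. $f - v_i^2 = 0$. From the identity, $f - v_i^2 = \kappa\, t\,E(t)^2$ with $\kappa$ a constant, so $f - v_i^2$ vanishes exactly on $t\,E(t)^2$. Since $\deg v_i = g$ and $\deg f = 2g+1$, the pole of $Y - v_i$ at $\infty$ has order $2g+1$. So
\[
\operatorname{div}(Y - v_i) = 2D + P_i - (2g+1)[\infty],
\]
where $P_i$ arises from the factor $t$ (i.e. $X = a_i$, where $v_i(a_i) = 0$). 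This gives $2([D] - g[\infty]) = -([P_i] - [\infty]) = -D_i = D_i$, since $D_i$ is $2$-torsion. Hence $T_i \coloneqq [D] - g[\infty]$ satisfies $2T_i = D_i$.

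The main obstacle is justifying that $(u_i, v_i \bmod u_i)$ is a genuine Mumford representation, i.e. that $D$ is reduced and has no pair of opposite points. This requires $E$ to have roots with $t \neq 0$ and no cancellation. Suppose $E(t_0) = 0$. Then $t_0\, O(t_0)^2 = -\prod(t_0 - r_\ell^2)$. If also $O(t_0) = 0$, then $t_0 = r_\ell^2$ for some $\ell$, and $P(\pm\sqrt{t_0}) = 0$ forces $r_\ell = -r_{\ell'}$ for some $\ell'$. That would give $a_\ell = a_{\ell'}$, so $\ell = \ell'$ and $r_\ell = 0$, contradicting $a_\ell \neq a_i$. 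Hence $v_i(x_0) \neq 0$ at every root $x_0$ of $u_i$. A repeated root $x_0$ of $u_i$ then corresponds to a single point $(x_0, v_i(x_0))$ taken with multiplicity, which is allowed in a reduced divisor since that point is not Weierstrass. Moreover, $x_0 \neq a_i$ because $E(0) = e_{2g}(\bm r) = \prod_{\ell\neq i} r_\ell \neq 0$. So $D$ has no opposite points and no Weierstrass points, hence $D$ is reduced and $(u_i, v_i \bmod u_i)$ is its Mumford representation, which completes the proof.
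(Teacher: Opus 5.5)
Your proposal is correct in substance, but it reaches $2T_i = D_i$ by a different route from the paper. The algebraic core is the same: your $E$, $O$ are the paper's $U$, $V$, and $P(s)P(-s) = E(t)^2 - tO(t)^2 = \prod_{\ell\neq i}(t - r_\ell^2)$ is the paper's factorization $\prod_{\ell\neq i}(z+r_\ell) = U(t)+zV(t)$.

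The difference is the last step. Once the paper has the key equation $f - v_i^2 = c(X-a_i)u_i^2$, it uses standard machinery. Validity of $[u_i, v_i \bmod u_i]$ follows from $u_i$ monic of degree $g$ with $u_i \mid f - v_i^2$ (Washington, Thm.~13.7). Then $2T_i = D_i$ follows from Cantor's doubling, whose output $u$-polynomial is $(f-v_i^2)/u_i^2 = c(X-a_i)$ made monic. No analysis of the roots of $u_i$ is needed.

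You instead compute $\operatorname{div}(Y - v_i(X)) = 2D + P_i - (2g+1)[\infty]$ directly, which is essentially a proof of this special case of the doubling step. This is self-contained and geometric. The cost is that you must pin down multiplicities, hence your checks that $u_i(a_i)\neq 0$ and $v_i(x_0)\neq 0$ at roots of $u_i$. Your $P(s)P(-s)$ argument for the latter is correct, but it also follows from the key equation. A common root $x_0$ of $u_i, v_i$ would give $f(x_0)=0$, so $f - v_i^2$ would vanish there to order exactly $1$, contradicting $u_i^2 \mid f - v_i^2$. Your phrase ``vanishes exactly on $tE(t)^2$'' only gestures at the multiplicity count. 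To make it rigorous, use the norm $(Y-v_i)(-Y-v_i) = -c(X-a_i)u_i^2$ together with the non-vanishing of $Y - v_i$ at the conjugate points $(x_0,-v_i(x_0))$.

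One slip needs fixing: your identity $c^{2g+1}f(X) = \pm t\prod_{\ell\neq i}(t-r_\ell^2)$ is off by a factor of $c$. Since $c(X-a_i) = -t$ and $c(X-a_\ell) = r_\ell^2 - t$, the correct statement is $c^{2g}f(X) = -t\prod_{\ell\neq i}(t-r_\ell^2) = -t\bigl(E(t)^2 - tO(t)^2\bigr)$. This matters. With your normalization the $t^2O(t)^2$ terms in $f - v_i^2$ would not cancel, and the cancellation also requires the sign to be exactly $-$, not $\pm$. With the correct identity, $f - v_i^2 = -c^{-2g}tE(t)^2 = c(X-a_i)u_i(X)^2$. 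Both your divisibility claim and your divisor computation rest on this.
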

\begin{proof}
Set $t \coloneqq -c(X-a_i)$, and put\vspace{-0.8mm}
\[
    U(t) \coloneqq \sum_{k=0}^{g}e_{2k}(\bm{r})t^{g-k}, \quad V(t) \coloneqq \sum_{k=1}^{g}e_{2k-1}(\bm{r})t^{g-k}.
\]
Let $z$ be a formal variable with $z^2=t$.
By definition of the elementary symmetric polynomials, we have\vspace{-0.4mm}
\begin{align*}
    \prod_{\ell \neq i}(z+r_\ell) &= \sum_{k=0}^{2g}e_k(\bm{r})z^{2g-k}\\[-0.5mm]
    &= \sum_{k=0}^ge_{2k}(\bm{r})z^{2g-2k} + \sum_{k=1}^ge_{2k-1}(\bm{r})z^{2g-2k+1}\\[-0.5mm]
    &= \sum_{k=0}^ge_{2k}(\bm{r})t^{g-k} + z\sum_{k=1}^ge_{2k-1}(\bm{r})t^{g-k} = U(t)+zV(t).\\[-7.5mm]
\end{align*}
This equation implies that\vspace{-1mm}
\begin{align*}
    U(t)^2-tV(t)^2 &= (U(t)+zV(t))(U(t)-zV(t))\\
    &= \prod_{\ell \neq i}(z+r_\ell) \cdot \prod_{\ell \neq i}(-z+r_\ell) = \prod_{\ell \neq i}(r_\ell^2-t).\\[-5.5mm]
\end{align*}
On the other hand, since $r_\ell^2 = c(a_i-a_\ell)$, we obtain $r_\ell^2-t = c(a_i-a_\ell) + c(X-a_i) = c(X-a_\ell)$, and hence, the above equation can be transformed into\vspace{-0.2mm}
\[
     U(t)^2-tV(t)^2 = c^{2g}\prod_{\ell \neq i}(X-a_\ell).
\]
Then, the right-hand side $f(X)$ of \eqref{eq:H} is written as\vspace{-0.8mm}
\[
    f(X) = c(X-a_i)\prod_{\ell \neq i}(X-a_\ell) = -\frac{t}{c^{2g}}(U(t)^2-tV(t)^2).\vspace{0.7mm}
\]
It follows from the definition of $U(t)$ and $V(t)$ that\vspace{-0.6mm}
\[
    u_i(X) = \frac{(-1)^g}{c^g}U(t), \quad v_i(X) = \frac{t}{c^g}V(t),
\]
which gives the key equation\vspace{-2.8mm}
\begin{align*}
    f(X) - v_i(X)^2 &= -\frac{t}{c^{2g}}(U(t)^2-tV(t)^2) - \frac{t^2}{c^{2g}}V(t)^2\\
    &= -\frac{t}{c^{2g}}U(t)^2\\
    &= c(X-a_i)u_i(X)^2.
\end{align*}
In particular, we see that $f(X)-v_i(X)^2$ is a multiple of $u_i(X)$.
Also, it is clear that\vspace{-0.5mm}
\begin{itemize}
    \item $u_i(X)$ is a monic polynomial of degree $g$,\vspace{-2.3mm}
    \item $v_i(X) \bmod{u_i(X)}$ has degree less than $g$,\vspace{0.8mm}
\end{itemize}
and thus $[u_i(X),v_i(X) \bmod{u_i(X)}]$ is a valid Mumford representation (cf. \cite[Theorem 13.7]{Washington}).
Furthermore, by Cantor's doubling algorithm (cf. \cite[Theorem 13.10]{Washington}), the $u$-polynomial of the double of the divisor class represented by $[u_i(X),v_i(X) \bmod{u_i(X)}]$ is given by $X-a_i$ from the key equation\vspace{-0.6mm}
\[
    \frac{f(X)-v_i(X)^2}{u_i(X)^2} = c(X-a_i).\vspace{-0.4mm}
\]
The corresponding $v$-polynomial is necessarily $0$ since $\deg(X-a_i) = 1$ and $f(a_i) = 0$ (cf. \cite[Example 13.2]{Washington}).
Consequently, we finally obtain\vspace{-0.7mm}
\[
    2[u_i(X),v_i(X) \bmod{u_i(X)}] = [X-a_i,0],\vspace{0.3mm}
\]
that is, $[u_i(X),v_i(X) \bmod{u_i(X)}]$ represents a point $T_i \in J(\bar{K})$ satisfying $2T_i = D_i$.\vspace{-1.2mm}
\end{proof}
\begin{Rmk}
In Lemma~\ref{lem:half-point}, each square root $r_\ell$ may be chosen independently up to sign.
Hence, there are $2^{2g}$ possible choices for the tuple $\bm{r} = (r_\ell)_{\ell \neq i}$, yielding $2^{2g}$ half-points of $D_i$.
This is consistent with the fact that $D_i$ has exactly $2^{2g}$ half points, since the $2$-torsion subgroup $J[2]$ has order $2^{2g}$.\vspace{0.5mm}
\end{Rmk}

Lemma~\ref{lem:half-point} provides explicit Mumford representations of certain 4-torsion points on the Jacobian $J$ of $H$.
We now prove the following proposition, which plays a central role in the proof of Theorem~\ref{thm:criterion}.\vspace{-1.1mm}
\begin{Prop}\label{prop:4-torsion}
Let $J$ be the Jacobian of the hyperelliptic curve $H$ as in \eqref{eq:H}.
Suppose that the $4$-torsion subgroup $J[4]$ is defined over $K$.
Then, for any pairwise distinct $i,j,k \in \{1,\ldots,2g+1\}$, the ratio\vspace{-0.1mm}
\begin{equation}\label{eq:ratio}
    \frac{a_i-a_j}{a_i-a_k}
\end{equation}
is a square in $K$.\vspace{-3mm}
\end{Prop}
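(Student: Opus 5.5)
Fix pairwise distinct $i,j,k$. The plan is to compare the half-points $T_i$ of $D_i$ from Lemma~\ref{lem:half-point} for two choices of the tuple $\bm{r}$ that differ only in the sign of one square root, and to read off rationality of that square root from the Mumford coefficients. Since $J[4]$ is defined over $K$, every point of $J[4]$, in particular every half-point of $D_i$, is $K$-rational. Because Mumford representations are unique and Galois-equivariant, the polynomials $u_i(X)$ and $v_i(X)\bmod u_i(X)$ then have coefficients in $K$ for every admissible choice of $\bm{r}$. Also $J[2]\subset J[4]$ is $K$-rational, so each $D_\ell$ is $K$-rational and hence each $a_\ell\in K$.

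The main step is to extract square roots from the coefficients of $u_i$. I would work with $U(t)=\sum_k e_{2k}(\bm r)t^{g-k}$, where $t=-c(X-a_i)$. Since $u_i(X)=(-1)^gc^{-g}U(t)$ and $t$ is a $K$-affine change of variable (with $c,a_i\in K$), $U$ has coefficients in $K$ for every choice of $\bm r$, so every $e_{2k}(\bm r)$ lies in $K$. Now flip the sign of a single $r_\ell$ to obtain $\bm r'$. Write $e_{2k}(\bm r)=A+r_\ell B$ with $A,B$ independent of $r_\ell$; then $e_{2k}(\bm r')=A-r_\ell B$, and the difference $2r_\ell B$ lies in $K$. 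Taking $k=1$ gives $B=e_1(\bm r_{\hat\ell})=\sum_{m\ne i,\ell}r_m$.

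The expected obstacle is that this sum could vanish. To avoid it, flip two signs, $r_\ell$ and $r_m$. For $k=1$ we have
\[
e_2(\bm r)=r_\ell r_m+(r_\ell+r_m)S+Q,
\]
where $S$ and $Q$ do not involve $r_\ell$ or $r_m$. Flipping both signs shows $2(r_\ell+r_m)S\in K$, flipping only $r_\ell$ shows $2r_\ell(r_m+S)\in K$, and flipping only $r_m$ shows $2r_m(r_\ell+S)\in K$. Subtracting the last two gives $2S(r_\ell-r_m)\in K$. Combined with $2S(r_\ell+r_m)\in K$, this yields $Sr_\ell\in K$ and $Sr_m\in K$. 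If $S\neq 0$, then $r_\ell/r_m\in K$. If $S=0$, the single-flip relation gives $r_\ell r_m\in K$, and then $r_\ell/r_m=r_\ell r_m/r_m^2\in K$ because $r_m^2=c(a_i-a_m)\in K^\times$. Either way $r_\ell/r_m\in K$. Applying this with $\ell=j$ and $m=k$ gives
\[
\Bigl(\frac{r_j}{r_k}\Bigr)^2=\frac{c(a_i-a_j)}{c(a_i-a_k)}=\frac{a_i-a_j}{a_i-a_k}
\]
with $r_j/r_k\in K$, which proves the proposition.

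When $g=1$ the index set $m\neq i,\ell$ is small, so $S$ may be an empty or short sum; the case analysis above handles that. The genus is at least $2$ anyway. I would double-check that the $S=0$ branch is airtight, and fall back on the $v_i$ coefficients $e_{2k-1}$ if needed.
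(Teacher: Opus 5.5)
Your proof is correct and follows the paper's argument: you use the same four half-points of $D_i$, obtained by flipping the signs of $r_j$, of $r_k$, or of both, and you exploit the $K$-rationality of the coefficient $e_2(\bm{r})$ of $u_i$. The case split on $S$ is unnecessary, because your two single-flip differences minus the double-flip difference give $2r_\ell(r_m+S)+2r_m(r_\ell+S)-2(r_\ell+r_m)S=4r_\ell r_m$; this is exactly the paper's identity $e_2(\bm{r})-e_2(\bm{r}^{(j)})-e_2(\bm{r}^{(k)})+e_2(\bm{r}^{(jk)})=4r_jr_k$, and it yields $r_jr_k\in K$ directly.
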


\newpage
\begin{proof}
Since $J[4]$ is defined over $K$, the $2$-torsion subgroup $J[2]$ is also defined over $K$.
Hence, in particular, the divisor class $D_i$ defined by \eqref{eq:D_i} is $K$-rational for every $i \in \{1,\ldots,2g+1\}$.
Recalling that the Mumford representation of $D_i$ is $[X-a_i,0]$, we obtain
$a_i \in K$ for every $i \in \{1,\ldots,2g+1\}$.

Fix pairwise distinct $i,j,k \in \{1,\ldots,2g+1\}$.
For each $\ell \in \{1,\ldots,2g+1\}\!\smallsetminus\!\{i\}$, choose an arbitrary square root $r_\ell \in \bar{K}^\times\hspace{-0.3mm}$ of $c(a_i-a_\ell)$, and define $\bm{r} \coloneqq (r_\ell)_{\ell \neq i}$.
Also, let $\bm{r}^{(j)},\bm{r}^{(k)}$, and $\bm{r}^{(jk)}$ denote the tuples obtained from $\bm{r}$ by changing the sign of $r_j$, the sign of $r_k$, and the signs of both $r_j$ and $r_k$, respectively.
By Lemma~\ref{lem:half-point},\\each of these choices determines a half-point of $D_i$.
Since $J[4]$ is defined over $K$, all of these half-points are $K$-rational.
Hence, the corresponding $u$-polynomials have coefficients in $K$.
In particular, by considering the coefficients of $(X-a_i)^{g-1}$ in \eqref{eq:u_i}, we obtain\vspace{-1.6mm}
\begin{equation}\label{eq:rational}
    e_2(\bm{r}),\ e_2(\bm{r}^{(j)}),\ e_2(\bm{r}^{(k)}),\ e_2(\bm{r}^{(jk)}) \in K\vspace{-0.2mm}
\end{equation}
since $c \in K^\times$.
On the other hand, a direct computation gives\vspace{-1.2mm}
\begin{equation}\label{eq:e_2}
    e_2(\bm{r}) - e_2(\bm{r}^{(j)}) - e_2(\bm{r}^{(k)}) + e_2(\bm{r}^{(jk)}) = 4r_jr_k.\vspace{0.3mm}
\end{equation}
Indeed, for a monomial $r_ar_b$, its contribution to $e_2(\bm{r}) - e_2(\bm{r}^{(j)}) - e_2(\bm{r}^{(k)}) + e_2(\bm{r}^{(jk)})$ is as follows:\vspace{-1mm}
\begin{itemize}
    \item If $\{a,b\}=\{j,k\}$, then its contribution is $r_jr_k - (-r_j)r_k - r_j(-r_k) + (-r_j)(-r_k) = 4r_jr_k$.\vspace{-2.2mm}
    \item If exactly one of $a,b$ belongs to $\{j,k\}$, then its contribution is\vspace{-1.4mm}
    \[
        r_jr_b - (-r_j)r_b - r_jr_b + (-r_j)r_b = 0 \ \text{ or }\ r_ar_k - r_ar_k - r_a(-r_k) + r_a(-r_k) = 0.\vspace{-1.3mm}
    \]
    \item Otherwise, its contribution is $r_ar_b - r_ar_b - r_ar_b + r_ar_b = 0$.\vspace{-0.6mm}
\end{itemize}
Hence, all terms cancel except $4r_jr_k$, which proves \eqref{eq:e_2}.
By combining \eqref{eq:rational} and \eqref{eq:e_2}, we obtain $r_jr_k \in K$. Since $r_k^2 = c(a_i-a_k) \in K^\times$, it follows that\vspace{-2.6mm}
\[
    \frac{a_i-a_j}{a_i-a_k} = \frac{r_j^2}{r_k^2} = \biggl(\frac{r_jr_k}{r_k^2}\biggr)^{\!2}\vspace{-1.2mm}
\]
is a square in $K$, as desired.
\end{proof}\vspace{0.5mm}

At the end of this section, we give a proof of Theorem~\ref{thm:criterion}.
\begin{proof}[Proof of Theorem~\ref{thm:criterion}]
It is well-known (cf. \cite[p.\,166]{Ekedahl}) that $C$ descends to a curve $H$ over $\mathbb{F}_{p^2}\hspace{-0.2mm}$ such that the Frobenius endomorphism $F$ of its Jacobian $J$ acts as $\pm p$.
Then, we choose $H$ as follows:\vspace{-1mm}
\begin{itemize}
\item[---] If $p \equiv 3 \pmod{4}$, we choose $H$ such that $F$ is equal to $-p$.\vspace{-2.6mm}
\item[---] If $p \equiv 1 \pmod{4}$, we choose $H$ such that $F$ is equal to $p$.
\end{itemize}
In either case, the Frobenius endomorphism $F$ acts trivially on $J[4]$.
This shows that every point of $J[4]$ is fixed by $F$, and thus $J[4]$ is defined over $\mathbb{F}_{p^2}$.
As $H$ is defined over $\mathbb{F}_{p^2}$, it admits a model of the form\vspace{-1.4mm}
\[
    H:\ Y^2=c\prod_{i=1}^{2g+2}(X-a_i),
    \qquad c \in\mathbb F_{p^2}\vspace{-0.3mm}
\]
where all the $a_i$ are distinct elements of $\overline{\mathbb{F}}_p$.
Let $P = (a_{2g},0),\hspace{0.7mm}Q = (a_{2g+1},0)$, and $R = (a_{2g+2},0)$, which are Weierstrass points of $H$.
Since $J[2]$ is defined over $\mathbb{F}_{p^2}$, the divisor classes $[P]-[Q],\hspace{0.8mm}[Q]-[R]$, and $[R]-[P]$ are $\mathbb{F}_{p^2}$-rational. 
Hence, their Mumford representations are defined over $\mathbb{F}_{p^2}$, so the corresponding $u$-polynomials\vspace{-1.1mm}
\[
    (X-a_{2g})(X-a_{2g+1}), \quad 
    (X-a_{2g+1})(X-a_{2g+2}), \quad (X-a_{2g+2})(X-a_{2g})\vspace{-0.1mm}
\]
have coefficients in $\mathbb{F}_{p^2}$.
It follows that\vspace{-1.4mm}
\[
    a_{2g+2} = \frac{(a_{2g+2}+a_{2g})
    -(a_{2g}+a_{2g+1})
    +(a_{2g+1}+a_{2g+2})}{2}
    \in\mathbb F_{p^2}.
\]
After an $\mathbb{F}_{p^2}$-rational Möbius transformation sending $a_{2g+2}$ to $\infty$, we may therefore assume that $a_{2g+2} = \infty$.
Then $H$ is given by an odd-degree model\vspace{-1.3mm}
\[
    H:\ Y^2 = c\prod_{i=1}^{2g+1}(X-a_i),
    \quad c \in \mathbb F_{p^2}.\vspace{-4.5mm}
\]

\newpage
Recalling that $C$ and $H$ are isomorphic over $\overline{\mathbb{F}}_p$ (i.e., $C$ is a Rosenhain form of $H$), there exists a permutation $\sigma$ of $\{1,\ldots,2g+2\}$ such that\vspace{-1.5mm}
\[
    \lambda_i = \frac{a_{\sigma(i)}-a_{\sigma(2g)}}{a_{\sigma(i)}-a_{\sigma(2g+2)}} \cdot \frac{a_{\sigma(2g+1)}-a_{\sigma(2g+2)}}{a_{\sigma(2g+1)}-a_{\sigma(2g)}}\vspace{-0.4mm}
\]
for every $i \in \{1,\ldots,2g-1\}$.
Since $a_{2g+2} = \infty$, the right-hand side is a product of factors, each of which is either $1$ or a ratio of the form \eqref{eq:ratio}.
Therefore $\lambda_i$ is a square in $\mathbb{F}_{p^2}\hspace{-0.4mm}$ by Proposition~\ref{prop:4-torsion}.
In addition, a direct computation shows that,  for every $i,j \in \{1,\ldots,2g-1\}$,\vspace{-0.3mm}
\begin{align*}
    1-\lambda_i &= \frac{a_{\sigma(i)}-a_{\sigma(2g+1)}}{a_{\sigma(i)}-a_{\sigma(2g+2)}} \cdot \frac{a_{\sigma(2g)}-a_{\sigma(2g+2)}}{a_{\sigma(2g)}-a_{\sigma(2g+1)}},\\[0.2mm]
    \lambda_i-\lambda_j &= \frac{a_{\sigma(i)}-a_{\sigma(j)}}{a_{\sigma(i)}-a_{\sigma(2g+2)}} \cdot \frac{a_{\sigma(2g+2)}-a_{\sigma(2g)}}{a_{\sigma(2g+2)}-a_{\sigma(j)}} \cdot \frac{a_{\sigma(2g+1)}-a_{\sigma(2g+2)}}{a_{\sigma(2g+1)}-a_{\sigma(2g)}},
\end{align*}
whose right-hand sides are products of factors, each of which is either \(1\) or of the form \eqref{eq:ratio}.
Hence, applying Proposition~\ref{prop:4-torsion} once again, we conclude that $1-\lambda_i$ and $\lambda_i-\lambda_j$ are squares in $\mathbb{F}_{p^2}\hspace{-0.4mm}$ as well.
\end{proof}

\setcounter{equation}{0}
\section{Enumeration of superspecial hyperelliptic curves}\label{sec:algorithm}
In this section, we present an algorithm for enumerating superspecial genus-$g$  hyperelliptic curves in characteristic $p$ for fixed $(g,p)$, and report on the computational results obtained by implementing it.\vspace{-1mm}
\subsection{Algorithm}
The basic idea of our algorithm is simple.
We first enumerate all sets $\{\lambda_1,\ldots,\lambda_{2g-1}\}$ satisfying the condition of Theorem~\ref{thm:criterion}.
For each such set $\{\lambda_1,\ldots,\lambda_{2g-1}\}$, we then determine whether the corresponding hyperelliptic curve in Rosenhain form
\[
    C: y^2 = x(x-1)\prod_{i=1}^{2g-1}(x-\lambda_i) \eqqcolon f(x)\vspace{0.4mm}
\]
is superspecial; \hspace{0.3mm}equivalently, whether the Cartier-Manin matrix of $C$ is zero (cf. \cite[Theorem 4.1]{Nygaard}).
As is well known (cf. \cite[Section 2]{Yui}), the Cartier-Manin matrix of $C$ is given by\vspace{-0.6mm}
\[
    \begin{bmatrix}
        \gamma_{p-1} & \!\gamma_{p-2} & \!\cdots & \!\gamma_{p-g}\\
        \gamma_{2p-1} & \!\gamma_{2p-2} & \!\cdots & \!\gamma_{2p-g}\\
        \vdots & \!\vdots & \!\ddots & \!\vdots\\
        \gamma_{gp-1} & \!\gamma_{gp-2} & \!\cdots & \!\gamma_{gp-g}
    \end{bmatrix}\hspace{-0.6mm},
\]
where $\gamma_k$ denotes the coefficient of $x^k$ in the polynomial $f(x)^{(p-1)/2}$.
Therefore, it suffices to check whether the coefficient of $x^{ip-j}$ in $f(x)^{(p-1)/2}$ vanishes for every $i,j \in \{1,\ldots,g\}$.

We note that, in order to avoid enumerating isomorphic hyperelliptic curves multiple times, it is necessary to detect whether $C$ is isomorphic to a curve that has already been enumerated.
As explained at the beginning of Section \ref{sec:preliminaries}, this can be determined as follows.
Let\vspace{-2mm}
\[
    C': y^2 = x(x-1)\prod_{i=1}^{2g-1}(x-\lambda'_i)\vspace{-0.2mm}
\]
be another hyperelliptic curve in Rosenhain form, and define $a_1,\ldots,a_{2g+2}$ such that\vspace{-0.8mm}
\[
    \{a_1,\ldots,a_{2g+2}\} = \{\lambda_1,\ldots,\lambda_{2g-1}\} \sqcup \{0,1,\infty\}\vspace{0.1mm}
\]
Then $C$ and $C'$ are isomorphic to each other if and only if there exist pairwise distinct
$i,j,k \in\hspace{-0.2mm} \{1,\ldots,2g+2\}$ such that\vspace{0.6mm}
\begin{equation}\label{eq:isomorphic}
    \left\{
    \frac{a_\ell-a_i}{a_\ell-a_k} \cdot \frac{a_j-a_k}{a_j-a_i}\ \middle|\ \ell \in \{1,\ldots,2g+2\}\!\smallsetminus\!\{i,j,k\}\right\} = \{\lambda'_1,\ldots,\lambda'_{2g-1}\}.\vspace{0.5mm}
\end{equation}
Summarizing the above discussion, our algorithm can be described as Algorithm~\ref{alg:main} below.
\vspace{-1.5mm}

\newpage
\begin{Alg}\label{alg:main}
\ Input: An integer $g \geq 2$ and a prime $p > 2$.\\
Output: A list of representatives for isomorphism classes of superspecial genus-$g$ hyperelliptic curves over $\overline{\mathbb{F}}_p$.\vspace{-1mm}
\begin{enumerate}
\setlength{\leftskip}{22pt}
\item[{\it Step 1.}\,] Compute the set\vspace{-1.1mm}
\[
    S \coloneqq \{\lambda \in \mathbb{F}_{p^2}\hspace{-0.2mm} \mid \hspace{0.3mm}\text{both  $\lambda$ and $1-\lambda$ are squares in $\mathbb{F}_{p^2}$}\}\!\smallsetminus\!\{0,1\}.\vspace{-0.3mm}
\]
Initialize three empty lists $\hspace{0.2mm}\mathcal{C},\hspace{0.2mm}\mathcal{T}\hspace{-0.2mm}$, and $\mathcal{L}$.\vspace{-1.2mm}
\item[{\it Step 2.}\,] Recursively choose distinct $\lambda_1,\ldots,\lambda_{2g-1} \in S$ such that $\lambda_i-\lambda_j$ is a square in $\mathbb{F}_{p^2}\hspace{-0.3mm}$ for every $i < j$.
If the resulting set $\{\lambda_1,\ldots,\lambda_{2g-1}\}$ does not belong to $\mathcal{T}$, then add it to the list $\mathcal{C}$.
Moreover, for every choice of pairwise distinct $i,j,k \in \{1,\ldots,2g+2\}$, add the sets\vspace{-0.2mm}
\[
    \scalebox{0.9}{$\displaystyle\left\{
    \frac{a_\ell-a_i}{a_\ell-a_k} \cdot \frac{a_j-a_k}{a_j-a_i}\ \middle|\ \ell \in \{1,\ldots,2g+2\}\!\smallsetminus\!\{i,j,k\}\right\}$}\vspace{0.4mm}
\]
to the list $\mathcal{T}$, where $\{a_1,\ldots,a_{2g+2}\} = \{\lambda_1,\ldots,\lambda_{2g-1}\} \sqcup \{0,1,\infty\}$.\vspace{-1.2mm}
\item[{\it Step 3.}\,] For each element $\{\lambda_1,\ldots,\lambda_{2g-1}\}$ of $\mathcal{C}$, let\vspace{-1.4mm}
\[
    \scalebox{0.9}{$\displaystyle C: y^2 = x(x-1)\prod_{i=1}^{2g-1}(x-\lambda_i) \eqqcolon f(x).$}\vspace{-0.6mm}
\]
If the coefficient of $x^{ip-j}$ in $f(x)^{(p-1)/2}$ vanishes for every $i,j\in \{1,\ldots,g\}$, add $C$ to the list $\mathcal{L}$.\vspace{-1.2mm}
\item[{\it Step 4.}\,] Output $\mathcal{L}$.\vspace{-0.5mm}
\end{enumerate}
\end{Alg}

\subsection{Experimental results}
We implemented Algorithm~\ref{alg:main} in \textsf{Magma}, and the specific source code is available at the following URL:
\begin{center}
    \url{https://github.com/Ryo-Ohashi/EnumSSpHypCurves}.\vspace{0.6mm}
\end{center}
In this subsection, we present the experimental results obtained by executing our algorithm.
The experiments were conducted on a machine equipped with an AMD EPYC 7742 CPU and 2TB of RAM.\vspace{-1.5mm}

\subsubsection{The genus-4 case}
We executed Algorithm~\ref{alg:main} for all primes $p$ satisfying $11 \leq p \leq 41$ in the case $g=4$.
For $p \in\hspace{-0.2mm} \{11,13,17,19\}$, our results agree with those of Kudo-Harashita~\cite{KH18,KH22}.
For $p \geq 23$, we obtained the following results.\vspace{-0.6mm}
\begin{itemize}
    \item In characteristic $p=23$, the number of isomorphism classes of superspecial genus-4 hyperelliptic curves is equal to $4$. Specifically, these classes are represented by\vspace{-0.5mm}
    \begin{align*}
        y^2 &= x(x-1)(x-\zeta^2)(x-\zeta^{26})(x-\zeta^{34})(x-10)(x-\zeta^{190})(x-\zeta^{410})(x-\zeta^{454}),\\[-1mm]
        y^2 &= x(x-1)(x-\zeta^2)(x-\zeta^{26})(x-\zeta^{362})(x-\zeta^{364})(x-\zeta^{406})(x-15)(x-\zeta^{440}),\\[-1mm]
        y^2 &= x(x-1)(x-\zeta^2)(x-\zeta^{44})(x-\zeta^{78})(x-\zeta^{132})(x-\zeta^{318})(x-\zeta^{440})(x-\zeta^{482}),\\[-1mm]
        y^2 &= x(x-1)(x-\zeta^8)(x-\zeta^{34})(x-\zeta^{42})(x-\zeta^{130})(x-\zeta^{164})(x-\zeta^{190})(x-\zeta^{380}),
    \end{align*}
    where $\zeta \in \mathbb{F}_{23^2}\hspace{-0.2mm}$ is a root of the quadratic equation $z^2-2z+5 = 0$.\vspace{-1mm}
    \item In characteristic $p=29$, the number of isomorphism classes of superspecial genus-4 hyperelliptic curves is equal to $8$. Specifically, these classes are represented by\vspace{-0.5mm}
    \begin{align*}
        y^2 &= x(x-1)(x-\zeta^2)(x-\zeta^{10})(x-4)(x-8)(x-19)(x-\zeta^{408})(x-\zeta^{700}),\\[-1mm]
        y^2 &= x(x-1)(x-\zeta^2)(x-\zeta^{10})(x-\zeta^{84})(x-\zeta^{550})(x-\zeta^{610})(x-\zeta^{664})(x-\zeta^{760}),\\[-1mm]
        y^2 &= x(x-1)(x-\zeta^2)(x-\zeta^{14})(x-\zeta^{186})(x-\zeta^{212})(x-\zeta^{266})(x-\zeta^{292})(x-\zeta^{576}),\\[-1mm]
        y^2 &= x(x-1)(x-\zeta^2)(x-\zeta^{26})(x-\zeta^{28})(x-\zeta^{54})(x-\zeta^{286})(x-\zeta^{576})(x-\zeta^{788}),\\[-1mm]
        y^2 &= x(x-1)(x-\zeta^2)(x-\zeta^{26})(x-\zeta^{52})(x-\zeta^{54})(x-\zeta^{576})(x-\zeta^{662})(x-\zeta^{752}),\\[-1mm]
        y^2 &= x(x-1)(x-\zeta^2)(x-\zeta^{26})(x-\zeta^{52})(x-\zeta^{266})(x-\zeta^{482})(x-\zeta^{626})(x-\zeta^{790}),\\[-1mm]
        y^2 &= x(x-1)(x-\zeta^2)(x-\zeta^{28})(x-7)(x-\zeta^{576})(x-\zeta^{628})(x-\zeta^{656})(x-\zeta^{762}),\\[-1mm]
        y^2 &= x(x-1)(x-2)(x-4)(x-8)(x-6)(x-7)(x-26)(x-11)
    \end{align*}
    where $\zeta \in \mathbb{F}_{29^2}\hspace{-0.2mm}$ is a root of the quadratic equation $z^2-5z+2 = 0$.\vspace{-1mm}
    \newpage
    \item In characteristic $p=31$, the number of isomorphism classes of superspecial genus-4 hyperelliptic curves is equal to $10$. Specifically, these classes are represented by\vspace{-0.5mm}
    \begin{align*}
        y^2 &= x(x-1)(x-\zeta^6)(x-\zeta^{12})(x-\zeta^{18})(x-\zeta^{24})(x-16)(x-\zeta^{454})(x-\zeta^{486}),\\[-1mm]
        y^2 &= x(x-1)(x-\zeta^6)(x-\zeta^{18})(x-\zeta^{24})(x-\zeta^{220})(x-25)(x-\zeta^{436})(x-\zeta^{460}),\\[-1mm]
        y^2 &= x(x-1)(x-\zeta^6)(x-\zeta^{18})(x-\zeta^{92})(x-\zeta^{262})(x-\zeta^{520})(x-\zeta^{786})(x-\zeta^{792}),\\[-1mm]
        y^2 &= x(x-1)(x-\zeta^6)(x-\zeta^{18})(x-\zeta^{186})(x-\zeta^{504})(x-\zeta^{536})(x-\zeta^{678})(x-\zeta^{752}),\\[-1mm]
        y^2 &= x(x-1)(x-\zeta^6)(x-\zeta^{18})(x-\zeta^{220})(x-\zeta^{438})(x-\zeta^{446})(x-\zeta^{524})(x-\zeta^{764}),\\[-1mm]
        y^2 &= x(x-1)(x-\zeta^6)(x-\zeta^{18})(x-25)(x-\zeta^{446})(x-\zeta^{454})(x-\zeta^{740})(x-\zeta^{840}),\\[-1mm]
        y^2 &= x(x-1)(x-\zeta^8)(x-\zeta^{36})(x-\zeta^{110})(x-\zeta^{226})(x-\zeta^{302})(x-\zeta^{326})(x-\zeta^{696}),\\[-1mm]
        y^2 &= x(x-1)(x-\zeta^{24})(x-\zeta^{48})(x-\zeta^{216})(x-\zeta^{240})(x-\zeta^{478})(x-\zeta^{658})(x-2),\\[-1mm]
        y^2 &= x(x-1)(x-\zeta^{24})(x-\zeta^{100})(x-\zeta^{216})(x-\zeta^{240})(x-\zeta^{460})(x-\zeta^{764})(x-\zeta^{884}),\\[-1mm]
        y^2 &= x(x-1)(x-3)(x-9)(x-27)(x-20)(x-4)(x-15)(x-14),
    \end{align*}
    where $\zeta \in \mathbb{F}_{31^2}\hspace{-0.2mm}$ is a root of the quadratic equation $z^2-2z+3 = 0$.\vspace{-1mm}
    \item In characteristic $p=37$, the number of isomorphism classes of superspecial genus-4 hyperelliptic curves is equal to $23$. Specifically, these classes are represented by\vspace{-0.5mm}
    \begin{align*}
        y^2 &= x(x-1)(x-\zeta^2)(x-\zeta^{22})(x-2)(x-\zeta^{458})(x-\zeta^{692})(x-29)(x-\zeta^{1346}),\\[-1mm]
        y^2 &= x(x-1)(x-\zeta^2)(x-\zeta^{22})(x-\zeta^{178})(x-\zeta^{202})(x-\zeta^{258})(x-\zeta^{726})(x-\zeta^{728}),\\[-1mm]
        y^2 &= x(x-1)(x-\zeta^2)(x-\zeta^{24})(x-\zeta^{26})(x-\zeta^{202})(x-\zeta^{420})(x-\zeta^{974})(x-\zeta^{1192}),\\[-1mm]
        y^2 &= x(x-1)(x-\zeta^2)(x-\zeta^{24})(x-\zeta^{50})(x-\zeta^{180})(x-\zeta^{268})(x-\zeta^{574})(x-19),\\[-1mm]
        y^2 &= x(x-1)(x-\zeta^2)(x-\zeta^{24})(x-\zeta^{458})(x-15)(x-\zeta^{516})(x-20)(x-\zeta^{1112}),\\[-1mm]
        y^2 &= x(x-1)(x-\zeta^2)(x-\zeta^{26})(x-\zeta^{202})(x-\zeta^{282})(x-\zeta^{542})(x-\zeta^{592})(x-\zeta^{1134}),\\[-1mm]
        y^2 &= x(x-1)(x-\zeta^2)(x-\zeta^{36})(x-\zeta^{170})(x-34)(x-\zeta^{846})(x-\zeta^{946})(x-\zeta^{980}),\\[-1mm]
        y^2 &= x(x-1)(x-\zeta^2)(x-\zeta^{36})(x-\zeta^{170})(x-\zeta^{728})(x-\zeta^{946})(x-\zeta^{980})(x-\zeta^{1150}),\\[-1mm]
        y^2 &= x(x-1)(x-\zeta^2)(x-\zeta^{36})(x-\zeta^{268})(x-\zeta^{458})(x-\zeta^{838})(x-\zeta^{1028})(x-\zeta^{1294}),\\[-1mm]
        y^2 &= x(x-1)(x-\zeta^2)(x-2)(x-8)(x-\zeta^{396})(x-\zeta^{846})(x-\zeta^{854})(x-20),\\[-1mm]
        y^2 &= x(x-1)(x-\zeta^2)(x-2)(x-\zeta^{164})(x-\zeta^{204})(x-\zeta^{854})(x-\zeta^{980})(x-\zeta^{1110}),\\[-1mm]
        y^2 &= x(x-1)(x-\zeta^2)(x-2)(x-\zeta^{172})(x-\zeta^{426})(x-\zeta^{692})(x-\zeta^{730})(x-\zeta^{848}),\\[-1mm]
        y^2 &= x(x-1)(x-\zeta^2)(x-\zeta^{50})(x-\zeta^{268})(x-\zeta^{726})(x-\zeta^{1028})(x-\zeta^{1138})(x-\zeta^{1256}),\\[-1mm]
        y^2 &= x(x-1)(x-\zeta^2)(x-\zeta^{62})(x-\zeta^{166})(x-\zeta^{230})(x-\zeta^{900})(x-\zeta^{964})(x-\zeta^{980}),\\[-1mm]
        y^2 &= x(x-1)(x-\zeta^2)(x-\zeta^{62})(x-\zeta^{258})(x-\zeta^{458})(x-\zeta^{900})(x-20)(x-24),\\[-1mm]
        y^2 &= x(x-1)(x-\zeta^2)(x-4)(x-\zeta^{470})(x-15)(x-\zeta^{730})(x-\zeta^{964})(x-\zeta^{1200}),\\[-1mm]
        y^2 &= x(x-1)(x-\zeta^2)(x-\zeta^{112})(x-\zeta^{120})(x-\zeta^{524})(x-30)(x-\zeta^{642})(x-\zeta^{644}),\\[-1mm]
        y^2 &= x(x-1)(x-\zeta^8)(x-\zeta^{24})(x-\zeta^{88})(x-\zeta^{252})(x-\zeta^{260})(x-\zeta^{678})(x-35),\\[-1mm]
        y^2 &= x(x-1)(x-\zeta^8)(x-\zeta^{24})(x-35)(x-\zeta^{1116})(x-\zeta^{1280})(x-\zeta^{1288})(x-\zeta^{1318}),\\[-1mm]
        y^2 &= x(x-1)(x-\zeta^8)(x-\zeta^{34})(x-\zeta^{566})(x-\zeta^{784})(x-\zeta^{828})(x-\zeta^{1250})(x-\zeta^{1338}),\\[-1mm]
        y^2 &= x(x-1)(x-\zeta^{16})(x-\zeta^{36})(x-\zeta^{56})(x-\zeta^{236})(x-\zeta^{1124})(x-\zeta^{1148})(x-\zeta^{1324}),\\[-1mm]
        y^2 &= x(x-1)(x-\zeta^{16})(x-\zeta^{36})(x-\zeta^{60})(x-\zeta^{558})(x-\zeta^{678})(x-\zeta^{934})(x-\zeta^{978}),\\[-1mm]
        y^2 &= x(x-1)(x-2)(x-4)(x-8)(x-17)(x-23)(x-10)(x-7),
    \end{align*}
    where $\zeta \in \mathbb{F}_{37^2}\hspace{-0.2mm}$ is a root of the quadratic equation $z^2-4z+2 = 0$.\vspace{-1mm}
    \item In characteristic $p=41$, the number of isomorphism classes of superspecial genus-4 hyperelliptic curves is equal to $34$. Specifically, these classes are represented by\vspace{-0.5mm}
    \begin{align*}
        y^2 &= x(x-1)(x-\zeta^2)(x-\zeta^{18})(x-\zeta^{20})(x-\zeta^{220})(x-\zeta^{434})(x-34)(x-\zeta^{1480}),\\[-1mm]
        y^2 &= x(x-1)(x-\zeta^2)(x-\zeta^{18})(x-\zeta^{48})(x-28)(x-\zeta^{568})(x-\zeta^{946})(x-\zeta^{1412}),\\[-1mm]
        y^2 &= x(x-1)(x-\zeta^2)(x-\zeta^{18})(x-\zeta^{220})(x-\zeta^{434})(x-\zeta^{954})(x-20)(x-\zeta^{1484}),\\[-1mm]
        y^2 &= x(x-1)(x-\zeta^2)(x-\zeta^{18})(x-\zeta^{270})(x-\zeta^{708})(x-\zeta^{728})(x-\zeta^{946})(x-\zeta^{1464}),\\[-1mm]
        y^2 &= x(x-1)(x-\zeta^2)(x-\zeta^{20})(x-\zeta^{234})(x-\zeta^{392})(x-\zeta^{440})(x-18)(x-\zeta^{696}),\\[-1mm]
        y^2 &= x(x-1)(x-\zeta^2)(x-\zeta^{20})(x-\zeta^{392})(x-\zeta^{696})(x-\zeta^{1056})(x-\zeta^{1366})(x-\zeta^{1448}),\\[-6mm]
    \end{align*}
    \begin{align*}
        y^2 &= x(x-1)(x-\zeta^2)(x-\zeta^{22})(x-\zeta^{86})(x-\zeta^{390})(x-\zeta^{706})(x-\zeta^{846})(x-\zeta^{1656}),\\[-1mm]
        y^2 &= x(x-1)(x-\zeta^2)(x-\zeta^{22})(x-25)(x-\zeta^{316})(x-\zeta^{460})(x-28)(x-\zeta^{946}),\\[-1mm]
        y^2 &= x(x-1)(x-\zeta^2)(x-\zeta^{22})(x-\zeta^{616})(x-\zeta^{708})(x-\zeta^{886})(x-\zeta^{946})(x-\zeta^{976}),\\[-1mm]
        y^2 &= x(x-1)(x-\zeta^2)(x-\zeta^{24})(x-36)(x-\zeta^{268})(x-\zeta^{388})(x-\zeta^{602})(x-\zeta^{1296}),\\[-1mm]
        y^2 &= x(x-1)(x-\zeta^2)(x-\zeta^{24})(x-\zeta^{338})(x-\zeta^{602})(x-\zeta^{884})(x-\zeta^{1292})(x-\zeta^{1598}),\\[-1mm]
        y^2 &= x(x-1)(x-\zeta^2)(x-\zeta^{24})(x-\zeta^{388})(x-\zeta^{626})(x-\zeta^{710})(x-\zeta^{1088})(x-\zeta^{1466}),\\[-1mm]
        y^2 &= x(x-1)(x-\zeta^2)(x-\zeta^{32})(x-\zeta^{142})(x-\zeta^{386})(x-\zeta^{708})(x-\zeta^{1082})(x-\zeta^{1468}),\\[-1mm]
        y^2 &= x(x-1)(x-\zeta^2)(x-\zeta^{32})(x-\zeta^{244})(x-\zeta^{270})(x-\zeta^{728})(x-\zeta^{934})(x-\zeta^{1294}),\\[-1mm]
        y^2 &= x(x-1)(x-\zeta^2)(x-\zeta^{32})(x-\zeta^{386})(x-\zeta^{486})(x-\zeta^{740})(x-\zeta^{902})(x-\zeta^{1648}),\\[-1mm]
        y^2 &= x(x-1)(x-\zeta^2)(x-\zeta^{32})(x-\zeta^{708})(x-\zeta^{780})(x-\zeta^{856})(x-\zeta^{886})(x-\zeta^{1480}),\\[-1mm]
        y^2 &= x(x-1)(x-\zeta^2)(x-\zeta^{34})(x-25)(x-\zeta^{202})(x-\zeta^{478})(x-\zeta^{946})(x-\zeta^{1462}),\\[-1mm]
        y^2 &= x(x-1)(x-\zeta^2)(x-6)(x-\zeta^{120})(x-\zeta^{142})(x-25)(x-30)(x-\zeta^{1484}),\\[-1mm]
        y^2 &= x(x-1)(x-\zeta^2)(x-\zeta^{78})(x-\zeta^{416})(x-\zeta^{458})(x-18)(x-13)(x-\zeta^{1412}),\\[-1mm]
        y^2 &= x(x-1)(x-\zeta^2)(x-36)(x-\zeta^{282})(x-18)(x-34)(x-30)(x-2),\\[-1mm]
        y^2 &= x(x-1)(x-\zeta^2)(x-\zeta^{86})(x-\zeta^{212})(x-\zeta^{476})(x-\zeta^{884})(x-\zeta^{1220})(x-\zeta^{1484}),\\[-1mm]
        y^2 &= x(x-1)(x-\zeta^2)(x-\zeta^{134})(x-\zeta^{456})(x-\zeta^{520})(x-\zeta^{602})(x-\zeta^{736})(x-\zeta^{974}),\\[-1mm]
        y^2 &= x(x-1)(x-\zeta^2)(x-\zeta^{142})(x-\zeta^{282})(x-\zeta^{478})(x-\zeta^{748})(x-2)(x-\zeta^{1224}),\\[-1mm]
        y^2 &= x(x-1)(x-\zeta^2)(x-\zeta^{142})(x-\zeta^{476})(x-\zeta^{616})(x-\zeta^{1066})(x-2)(x-\zeta^{1346}),\\[-1mm]
        y^2 &= x(x-1)(x-\zeta^2)(x-\zeta^{146})(x-\zeta^{414})(x-\zeta^{434})(x-\zeta^{460})(x-\zeta^{846})(x-\zeta^{1414}),\\[-1mm]
        y^2 &= x(x-1)(x-\zeta^{16})(x-\zeta^{34})(x-\zeta^{286})(x-\zeta^{310})(x-\zeta^{730})(x-\zeta^{760})(x-\zeta^{1024}),\\[-1mm]
        y^2 &= x(x-1)(x-\zeta^{16})(x-\zeta^{34})(x-\zeta^{408})(x-\zeta^{472})(x-31)(x-\zeta^{1252})(x-\zeta^{1462}),\\[-1mm]
        y^2 &= x(x-1)(x-\zeta^{16})(x-6)(x-\zeta^{106})(x-\zeta^{148})(x-\zeta^{562})(x-\zeta^{1308})(x-\zeta^{1590}),\\[-1mm]
        y^2 &= x(x-1)(x-\zeta^{16})(x-6)(x-11)(x-\zeta^{432})(x-\zeta^{518})(x-40)(x-\zeta^{1442}),\\[-1mm]
        y^2 &= x(x-1)(x-\zeta^{16})(x-6)(x-11)(x-\zeta^{726})(x-\zeta^{746})(x-12)(x-\zeta^{1590}),\\[-1mm]
        y^2 &= x(x-1)(x-\zeta^{16})(x-\zeta^{92})(x-\zeta^{232})(x-\zeta^{520})(x-\zeta^{1268})(x-\zeta^{1288})(x-\zeta^{1484}),\\[-1mm]
        y^2 &= x(x-1)(x-\zeta^{16})(x-\zeta^{280})(x-\zeta^{476})(x-18)(x-\zeta^{936})(x-\zeta^{952})(x-\zeta^{1316}),\\[-1mm]
        y^2 &= x(x-1)(x-6)(x-36)(x-29)(x-28)(x-4)(x-26)(x-33),\\[-1mm]
        y^2 &= x(x-1)(x-6)(x-25)(x-39)(x-4)(x-3)(x-2)(x-20),
    \end{align*}
    where $\zeta \in \mathbb{F}_{41^2}\hspace{-0.2mm}$ is a root of the quadratic equation $z^2-3z+6 = 0$.\vspace{-0.5mm}
\end{itemize}
In particular, the above experimental results yield Theorem~\ref{thm:genus4}.
We note that these computations required a total of 28,196 seconds ($\approx$ 7.83 hours).\vspace{-1mm}

\subsubsection{The genus-5 case}
We executed Algorithm~\ref{alg:main} for all primes $p$ satisfying $13 \leq p \leq 41$ in the case $g=5$, obtaining the following results.\vspace{-0.7mm}
\begin{itemize}
\item In \hspace{-0.1mm}characteristic $\hspace{-0.2mm}p=13$, \hspace{-0.1mm}there is no superspecial genus-5 hyperelliptic curve.\vspace{-1.5mm}
\item In \hspace{-0.1mm}characteristic $\hspace{-0.2mm}p=17$, \hspace{-0.1mm}there is no superspecial genus-5 hyperelliptic curve.\vspace{-1.2mm}
\item In characteristic $p=19$, the number of isomorphism classes of superspecial genus-5 hyperelliptic curves is equal to $1$.
Specifically, this class is represented by\vspace{-0.6mm}
\[
    \scalebox{0.94}{$y^2 = x(x-1)(x-\zeta^4)(x-\zeta^{24})(x-\zeta^{44})(x-\zeta^{144})(x-\zeta^{244})(x-\zeta^{264})(x-\zeta^{284})(x-\zeta^{288})(x-\zeta^{324})$},\vspace{0.6mm}
\]
where $\zeta \in \mathbb{F}_{19^2}\hspace{-0.2mm}$ is a root of the quadratic equation $z^2-z+2 = 0$.\vspace{-1mm}
\item In characteristic $p=23$, the number of isomorphism classes of superspecial genus-5 hyperelliptic curves is equal to $2$.
Specifically, this class is represented by\vspace{-0.8mm}
\begin{align*}
    \scalebox{0.94}{$y^2\hspace{0.7mm}$} & \scalebox{0.94}{$=x(x-1)(x-\zeta^8)(x-\zeta^{34})(x-\zeta^{82})(x-\zeta^{130})(x-\zeta^{156})(x-\zeta^{164})(x-\zeta^{190})(x-\zeta^{346})(x-\zeta^{502})$},\\[-1mm]
    \scalebox{0.94}{$y^2\hspace{0.7mm}$} & \scalebox{0.94}{$=x(x-1)(x-5)(x-2)(x-10)(x-8)(x-11)(x-9)(x-22)(x-18)(x-15)$},
\end{align*}
where $\zeta \in \mathbb{F}_{23^2}\hspace{-0.2mm}$ is a root of the quadratic equation $z^2-2z+5 = 0$.
\newpage
\item In characteristic $p=29$, the number of isomorphism classes of superspecial genus-5 hyperelliptic curves is equal to $1$.
Specifically, this class is represented by\vspace{-0.7mm}
\[
    \scalebox{0.94}{$y^2 = x(x-1)(x-\zeta^8)(x-\zeta^{20})(x-\zeta^{84})(x-\zeta^{96})(x-\zeta^{260})(x-\zeta^{410})(x-\zeta^{530})(x-\zeta^{706})(x-\zeta^{726})$},\vspace{0.5mm}
\]
where $\zeta \in \mathbb{F}_{29^2}\hspace{-0.2mm}$ is a root of the quadratic equation $z^2-5z+2 = 0$.\vspace{-1mm}
\item In characteristic $p=31$, the number of isomorphism classes of superspecial genus-5 hyperelliptic curves is equal to $6$.
Specifically, this class is represented by\vspace{-0.7mm}
\begin{align*}
    \scalebox{0.94}{$y^2\hspace{0.7mm}$} & \scalebox{0.94}{$=x(x-1)(x-\zeta^6)(x-\zeta^{18})(x-\zeta^{24})(x-\zeta^{134})(x-\zeta^{244})(x-25)(x-\zeta^{436})(x-\zeta^{536})(x-\zeta^{722})$},\\[-1mm]
    \scalebox{0.94}{$y^2\hspace{0.7mm}$} & \scalebox{0.94}{$=x(x-1)(x-\zeta^6)(x-\zeta^{38})(x-\zeta^{66})(x-\zeta^{262})(x-\zeta^{326})(x-\zeta^{418})(x-\zeta^{436})(x-\zeta^{528})(x-\zeta^{700})$},\\[-1mm]
    \scalebox{0.94}{$y^2\hspace{0.7mm}$} & \scalebox{0.94}{$=x(x-1)(x-\zeta^6)(x-\zeta^{38})(x-\zeta^{66})(x-\zeta^{304})(x-\zeta^{446})(x-\zeta^{474})(x-\zeta^{524})(x-\zeta^{740})(x-\zeta^{826})$},\\[-1mm]
    \scalebox{0.94}{$y^2\hspace{0.7mm}$} & \scalebox{0.94}{$=x(x-1)(x-\zeta^6)(x-\zeta^{126})(x-\zeta^{202})(x-\zeta^{226})(x-\zeta^{418})(x-\zeta^{446})(x-\zeta^{520})(x-\zeta^{586})(x-\zeta^{662})$},\\[-1mm]
    \scalebox{0.94}{$y^2\hspace{0.7mm}$} & \scalebox{0.94}{$=x(x-1)(x-\zeta^8)(x-3)(x-\zeta^{134})(x-\zeta^{272})(x-10)(x-\zeta^{574})(x-\zeta^{666})(x-\zeta^{674})(x-6)$},\\[-1mm]
    \scalebox{0.94}{$y^2\hspace{0.7mm}$} & \scalebox{0.94}{$=x(x-1)(x-3)(x-9)(x-26)(x-8)(x-30)(x-28)(x-22)(x-5)(x-23)$},
\end{align*}
where $\zeta \in \mathbb{F}_{31^2}\hspace{-0.2mm}$ is a root of the quadratic equation $z^2-2z+3 = 0$.\vspace{-1mm}
\item In characteristic $p=37$, the number of isomorphism classes of superspecial genus-5 hyperelliptic curves is equal to $5$.
Specifically, this class is represented by\vspace{-0.7mm}
\begin{align*}
    \scalebox{0.94}{$y^2\hspace{0.7mm}$} & \scalebox{0.94}{$=x(x-1)(x-\zeta^2)(x-\zeta^{22})(x-2)(x-\zeta^{202})(x-\zeta^{258})(x-\zeta^{306})(x-\zeta^{728})(x-29)(x-\zeta^{1346})$},\\[-1mm]
    \scalebox{0.94}{$y^2\hspace{0.7mm}$} & \scalebox{0.94}{$=x(x-1)(x-\zeta^2)(x-\zeta^{24})(x-\zeta^{202})(x-\zeta^{458})(x-\zeta^{678})(x-\zeta^{934})(x-\zeta^{1112})(x-\zeta^{1134})(x-\zeta^{1136})$},\\[-1mm]
    \scalebox{0.94}{$y^2\hspace{0.7mm}$} & \scalebox{0.94}{$=x(x-1)(x-\zeta^2)(x-\zeta^{24})(x-\zeta^{258})(x-\zeta^{458})(x-\zeta^{678})(x-\zeta^{934})(x-\zeta^{1086})(x-\zeta^{1112})(x-\zeta^{1192})$},\\[-1mm]
    \scalebox{0.94}{$y^2\hspace{0.7mm}$} & \scalebox{0.94}{$=x(x-1)(x-\zeta^2)(x-\zeta^{24})(x-\zeta^{516})(x-\zeta^{662})(x-\zeta^{844})(x-\zeta^{1068})(x-\zeta^{1088})(x-\zeta^{1112})(x-\zeta^{1312})$},\\[-1mm]
    \scalebox{0.94}{$y^2\hspace{0.7mm}$} & \scalebox{0.94}{$=x(x-1)(x-\zeta^2)(x-\zeta^{26})(x-\zeta^{168})(x-\zeta^{202})(x-\zeta^{644})(x-\zeta^{810})(x-\zeta^{846})(x-\zeta^{1112})(x-\zeta^{1114})$},
\end{align*}
where $\zeta \in \mathbb{F}_{37^2}\hspace{-0.2mm}$ is a root of the quadratic equation $z^2-4z+2 = 0$.\vspace{-1mm}
\item In characteristic $p=41$, the number of isomorphism classes of superspecial genus-5 hyperelliptic curves is equal to $3$.
Specifically, this class is represented by\vspace{-0.7mm}
\begin{align*}
    \scalebox{0.94}{$y^2\hspace{0.7mm}$} & \scalebox{0.94}{$=x(x-1)(x-\zeta^2)(x-\zeta^{18})(x-\zeta^{270})(x-\zeta^{476})(x-\zeta^{568})(x-\zeta^{616})(x-\zeta^{954})(x-\zeta^{1388})(x-\zeta^{1412})$},\\[-1mm]
    \scalebox{0.94}{$y^2\hspace{0.7mm}$} & \scalebox{0.94}{$=x(x-1)(x-\zeta^2)(x-\zeta^{20})(x-6)(x-\zeta^{434})(x-18)(x-34)(x-9)(x-\zeta^{1336})(x-\zeta^{1480})$},\\[-1mm]
    \scalebox{0.94}{$y^2\hspace{0.7mm}$} & \scalebox{0.94}{$=x(x-1)(x-\zeta^{16})(x-6)(x-11)(x-\zeta^{432})(x-\zeta^{518})(x-\zeta^{810})(x-40)(x-\zeta^{886})(x-\zeta^{950})$},
\end{align*}
where $\zeta \in \mathbb{F}_{41^2}\hspace{-0.2mm}$ is a root of the quadratic equation $z^2-3z+6 = 0$.\vspace{-0.5mm}
\end{itemize}
In particular, the above experimental results yield Theorem~\ref{thm:genus5}.
We note that these computations required a total of 46,919 seconds ($\approx$ 13.03 hours).\vspace{-1mm}

\subsubsection{The genus-6 case}
We executed Algorithm~\ref{alg:main} for all primes $p$ satisfying $17 \leq p \leq 31$ in the case $g=6$, obtaining the following results.\vspace{-0.7mm}
\begin{itemize}
\item In \hspace{-0.1mm}characteristic $\hspace{-0.2mm}p=17$, \hspace{-0.1mm}there is no superspecial genus-6 hyperelliptic curve.\vspace{-1.5mm}
\item In \hspace{-0.1mm}characteristic $\hspace{-0.2mm}p=19$, \hspace{-0.1mm}there is no superspecial genus-6 hyperelliptic curve.\vspace{-1.2mm}
\item In characteristic $p=23$, the number of isomorphism classes of superspecial genus-6 hyperelliptic curves is equal to $1$.
Specifically, this class is represented by $y^2 = x(x-1)\prod_{i=1}^{11}(x-\lambda_i)$ with
\[
    \{\lambda_1,\ldots,\lambda_{11}\} = \{\zeta^2,\zeta^{26},\zeta^{74},\zeta^{122},\zeta^{146},\zeta^{338},\zeta^{362},\zeta^{410},\zeta^{458},\zeta^{482},\zeta^{484}\},\vspace{0.6mm}
\]
where $\zeta \in \mathbb{F}_{23^2}\hspace{-0.2mm}$ is a root of the quadratic equation $z^2-2z+5 = 0$.\vspace{-1mm}
\item In \hspace{-0.1mm}characteristic $\hspace{-0.2mm}p=29$, \hspace{-0.1mm}there is no superspecial genus-6 hyperelliptic curve.\vspace{-1.2mm}
\item In characteristic $p=31$, the number of isomorphism classes of superspecial genus-6 hyperelliptic curves is equal to $1$.
Specifically, this class is represented by $y^2 = x(x-1)\prod_{i=1}^{11}(x-\lambda_i)$ with
\[
    \{\lambda_1,\ldots,\lambda_{11}\} =  \{\zeta^6,\zeta^{12},\zeta^{24},\zeta^{214},\zeta^{226},\zeta^{386},\zeta^{486},\zeta^{586},\zeta^{746},\zeta^{758},\zeta^{948}\},\vspace{0.6mm}
\]
where $\zeta \in \mathbb{F}_{31^2}\hspace{-0.2mm}$ is a root of the quadratic equation $z^2-2z+3 = 0$.\vspace{-0.5mm}
\end{itemize}
In particular, the above experimental results yield Theorem~\ref{thm:genus6}.
We note that these computations required a total of 4,149 seconds ($\approx$ 1.15 hours).\vspace{2.3mm}

\textsc{Graduate School of Information Science and Technology, The
University of Tokyo — 7-3-1 Hongo, Bunkyo-ku, Tokyo, 113-0033, Japan.}\par
\textit{E-mail address}: \url{ryo-ohashi@g.ecc.u-tokyo.ac.jp}

\vspace{-4mm}

\newpage
\begin{thebibliography}{99}\vspace{-1mm}
\bibitem{Ekedahl} \textsc{T\hspace{-0.2mm}.\,Ekedahl}: \textit{On supersingular curves and abelian varieties}, Math.\hspace{0.8mm}Scand.\,{\bf 60}, No.\,2, 151--178, 1987.\vspace{-1mm}
\bibitem{Kudo} \textsc{M\hspace{-0.1mm}.\,Kudo}: \textit{Counting isomorphism classes of superspecial curves}, RIMS Kôkyûroku Bessatsu {\bf B90}, 77-95, 2022.\vspace{-1mm}
\bibitem{KH18} \textsc{M\hspace{-0.1mm}.\,Kudo, S.\,Harashita}: \textit{Superspecial hyperelliptic curves of genus 4 over small finite fields}, Arithmetic of finite fields, WAIFI 2018, LNCS \hspace{-0.2mm}{\bf 11321}, 58--73, 2018.\vspace{-1mm}
\bibitem{KH22} \textsc{M\hspace{-0.1mm}.\,Kudo, S.\,Harashita}: \textit{Algorithmic study of superspecial hyperelliptic curves over finite fields}, Comment.\,Math.\,Univ.\,St.\,Pauli {\bf 70}, 49--64, 2022.\vspace{-1mm}
\bibitem{NN} \textsc{M\hspace{-0.1mm}.\,S\hspace{-0.1mm}.\,Narasimhan, M\hspace{-0.1mm}.\hspace{0.1mm}V\hspace{-0.5mm}.\,Nori}: \textit{Polarisations on an abelian variety}, Proc.\,Indian Acad.\,Sci., Math.\,Sci. {\bf 90}, 125--128, 1981.\vspace{-1mm}
\bibitem{Nygaard} \textsc{N.\,O.\,Nygaard}: \textit{Slopes of powers of Frobenius on crystalline cohomology}, Ann.\,Sci.\,École Norm.\,Sup.\,{\bf 14}, No.\,4, 369--401, 1981.\vspace{-1mm}
\bibitem{Ohashi} \textsc{R.\,Ohashi}: \textit{On the Rosenhain forms of superspecial curves of genus two}, Finite Fields Appl.\,{\bf 97}, 102445, 2024.\vspace{-1mm}
\bibitem{OOKYN} \textsc{R.\,Ohashi, H\hspace{-0.1mm}.\,Onuki, M\hspace{-0.1mm}.\,Kudo, R.\hspace{0.4mm}Yoshizumi, K\hspace{-0.1mm}.\,Nuida}: \textit{Listing superspecial curves of genus three using Richelot isogeny graphs}, Res.\,Number \hspace{-0.2mm}Theory \hspace{-0.3mm}{\bf 11}, Paper No.\,76, 2025.\vspace{-1mm}
\bibitem{Valentini} \textsc{R.\,C.\,Valentini}: \textit{Hyperelliptic curves with zero Hasse-Witt matrix}, Manuscripta Math.\,{\bf 86}, No.\,2, 185--194, 1995.\vspace{-1mm}
\bibitem{Washington} \textsc{L.\,C.\hspace{0.3mm}Washington}, \textit{Elliptic Curves: Number Theory and Cryptography}, CRC, 2008.\vspace{-1mm}
\bibitem{Yui} \textsc{N\hspace{-0.1mm}.\hspace{0.3mm}Yui}: \textit{On the Jacobian varieties of hyperelliptic curves over fields of characteristic $p \hspace{-0.2mm}>\hspace{-0.2mm} 2$}, J.\,Algebra\,{\bf 52}, 378--410, 1978.\vspace{-1mm}
\bibitem{Zarhin} \textsc{Y\hspace{-0.2mm}.\,G.\,Zarhin}, \textit{Division by 2 on hyperelliptic curves and Jacobians}, arXiv:1606.05252.
\end{thebibliography}
\end{document}